%
%

\documentclass[a4paper]{amsart}
\usepackage[textwidth=31pc,textheight=51.5pc]{geometry}
\usepackage{tikz}
\usepackage{amsmath}
\usepackage{cases}
\usepackage{graphicx}
\usepackage{caption}
\usepackage{subcaption}
\usepackage{url}
\usepackage{hyperref}
\usepackage{xcolor}
\usepackage{bm}

\newtheorem{theorem}{Theorem}[section]
\newtheorem{lemma}[theorem]{Lemma}

\theoremstyle{definition}

\newtheorem{example}[theorem]{Example}

\theoremstyle{remark}
\newtheorem{remark}[theorem]{Remark}

\numberwithin{equation}{section}



\begin{document}

\title[Extrapolation of Stationary Random Fields Via Level Sets]{Extrapolation of Stationary Random Fields Via Level Sets}

\author{Abhinav Das}
\address{Kalaiya Sub-Metropolitan City, Ward No:. 7, District:. Bara, Province:. 2,  Nepal}
\email{abhinabdas7@gmail.com}

\author{Vitalii Makogin}
\address{Institute of Stochastics,
University of Ulm, 
Germany}
\email{vitalii.makogin@uni-ulm.de}

\author{Evgeny Spodarev}
\address{Institute of Stochastics, 
University of Ulm, 
Germany}
\email{evgeny.spodarev@uni-ulm.de}

\subjclass[2020]{Primary 54C40, 14E20; Secondary 46E25, 20C20}

\date{\today}


\keywords{stationary random field, Gaussian random field, extrapolation, linear prediction, excursion, level set, second order cone programming, quadratically constrained quadratic problem}

\begin{abstract}
In this paper, we use the concept of excursion sets for the extrapolation of stationary random fields. Doing so, we  define excursion sets for the field and its linear predictor, and then minimize the expected volume of the symmetric difference of these sets under the condition that the univariate distributions  of the predictor and of the field itself coincide.  We illustrate the new approach on Gaussian  random fields.
\end{abstract}

\maketitle


\section{Introduction}\label{sect:Intro}
 In geostatistics,  a Gaussian random field is one of standard
models for the regionalized variable $X$. For such fields, {\it kriging} is an appropriate extrapolation technique.   Here, {\it simple  kriging } (with a known mean of the field) coincides with the Gaussian {\it linear regression}, see e.g. \cite[p. 302]{shi96}.    Various kriging methods are also widely used for the extrapolation of stationary random fields with a finite second moment. They yield the best linear unbiased predictor. The optimality criterion is given here by the smallest mean square error of the estimation.  Depending on the assumptions about $X$,  several types of kriging are available, cf. e.g. \cite{chil1999geostatistics,Spodarev_2014,Stein99,wackernagel2013multivariate}. In the finite variance case, the non--linear regression (where the predictor is the conditional expectation of the regionalized variable  provided observations of the field) is still applicable, although hard to compute. Beyond different deterministic extrapolation methods (such as e.g. triangulation \cite{KletRoz04,LaiSchu07}, splines \cite{ANZH14-19,Holhorn13}, radial extrapolation \cite{Bianc17}, or reproducing kernel Hilbert spaces \cite{BerlTho04,ScheuSchaSchla13}), it is also important to mention  the classical spectral $L^2$--theory of linear prediction of stationary processes, cf.  \cite{Roz67}. 

The literature on the  inter- or extrapolation as well as prediction of random processes and fields is huge (see e.g. \cite{Cressie93,DigRib07,GaeGu10,Mather19,Scheu09,ScheuSchaSchla13,Schlaetal15} and references therein, to mention just a few). In the infinite variance case, however, the approaches are tailored to specific classes of processes or fields under consideration (such as e.g. $\alpha$-stable \cite{KSS11,Moh17,Moh09,SamTaq94,Spodarev_2014}).   However, the general framework for the extrapolation  of  heavy-tailed random fields  is still missing. 

We try to fill this gap  { by noting that two random fields are, in a sense, similar if their level  (or excursion)  sets are similar.  To be more precise,  two random fields  modeling some feature with the same structure of excursions have the same total amount of this feature exceeding each level over a fixed time interval or a spatial domain. This is certainly of interest for practical applications to insurance (with the feature being the claim size), environmetrics (e.g. for the amount of environmental pollution or radiation), etc. } In our approach proposed below,  similarity is measured by the expected volume of the symmetric difference of the level sets. It is sometimes also called {\it expected distance in measure}.  Although other measures of similarity such as e.g. the mean Hausdorff distance are also thinkable, our choice is motivated by the relatively simple structure and computational tractability of the appearing mean error terms. We are looking for a linear predictor of the field values which minimizes this expected volume cumulated over a finite number of chosen excursion levels. To enforce the uniqueness of extrapolation, a constraint is added that the univariate distributions of the field and of the predictor coincide.  { To motivate this constraint, recall that conditional simulation (see e.g. \cite{Lantu02}) is a popular alternative to extrapolation which mimics the conditional distribution of the field provided the observations are set. Its clear drawback is however usually quite long run times due to  extensive computation methods such as  Markov Chain Monte Carlo.  Our constraint  enables us  to use the advantage of conditional simulation (equality in distribution for marginals) without being necessarily computationally demanding. }  In the case of linear predictors, this constraint seems very natural within the class of infinitely divisible random fields.  { Indeed, the class of possible predictors is rich enough there (and in some sense similar to kriging).  However, also for general stationary random fields, the class of linear predictors satisfying the above constraint is not empty, since it contains at least all predictors that are equal to observed values of the field.} 

Excursions of random fields are known to describe the geometry and the extremal behaviour of sufficiently smooth random surfaces pretty well, see e.g. \cite{AdTay07,AzWsche09,Tomita90}. More recently, extrapolation and Bayesian analysis were used for level set estimation in the Gaussian setting \cite{Azzetal16,AzzGins18,BolLind15,Chevetal14,VazMar06}.  
 
 The paper is organized as follows: after introducing some notation, 
 the very general extrapolation approach for stationary measurable infinitely divisible random fields without any integrability assumptions on them is stated in Section \ref{sect:ExtrLevSet}. Its use is illustrated in Section \ref{sect:GaussRF} for stationary Gaussian random fields. There, our extrapolation problem appears to be a well--known linear programming problem with linear as well as quadratic constraints, a special case of the Second Order Cone Programming. Its complete solution is presented including the issues of existence and uniqueness. The solution is different depending on whether the mean of the field is assumed to be unknown or zero which shows direct parallels to ordinary or simple kiriging.  It is shown that the new extrapolation method is exact. Moreover, it differs from the ordinary or simple kriging. Its consistency is investigated as well.
 Section \ref{sect:NumSim} provides a numerical simulation study showing that the new extrapolation performs well in Gaussian processes case. 
 
 {Since the volume of excursion sets is a Lebesgue integral of the corresponding indicator function, replacing a  random field by its indicators enables us to extrapolate also non--Gaussian random fields without any moment or tail conditions.}
 We apply our approach to heavy--tailed  infinitely divisible random fields (such as $\alpha$--stable) in forthcoming papers.

Introduce some notation.
Let $\langle \cdot,\cdot \rangle$ be the Euclidean scalar product  and $\| \cdot \|$  the Euclidean norm in  $\mathbb{R}^n$. We write ${\bf e}=(1,1,1,\ldots,1)^\top \in\mathbb{R}^n$ for the vector with all coordinates equal to one. Let $v_{n}(B)$ denote the volume of a measurable set $B \subset \mathbb{R}^n$, and $\mathbb{I}$\{C\} be the indicator function of a  set $C$.
Moreover, we us the standard notation $$\Phi_{\mu,\sigma} \left(x\right) = \frac{1}{\sigma\sqrt{2 \pi}} \int_{-\infty}^{x} \exp\left(-\frac{(y-\mu)^2}{2\sigma^2 }\right)dy, \quad \overline{\Phi}_{\mu,\sigma} \left(x\right)=1-\Phi_{\mu,\sigma} \left(x\right),\quad x \in \mathbb{R}$$
  for the c.d.f. of $N\left(\mu,\sigma^{2} \right)$--law, whereas
we write $\Phi,\overline{\Phi}$ for $\Phi_{0,1}$ and $\overline{\Phi}_{0,1},$ respectively.  

 
 \section{Extrapolation via Level Sets}\label{sect:ExtrLevSet}

As pointed out before, there is no unified theory yet for the extrapolation of (possibly heavy-tailed) random fields. The goal of this section is to propose such a framework which compares the volumes of level sets of the field itself and of its extrapolator.

On a complete probability space $(\Omega,\mathcal{F} ,\mathbf{P}),$ consider a real-valued (strictly) stationary  measurable  infinitely divisible random field $X =\ \left\{X(t), t\in \mathbb{R}^d \right\}$  with marginal distribution function $F_{X}:$ $F_{X}(x) = \mathbf{P}\left(X\left(0\right) \leq x\right),$ $x \in \mathbb{R}$. Let $\left\{X(t_{j})\right\}^{n}_{j=1}$ be observations of the random field $X$ at locations $\{t_1,\ldots,t_n\} \subset W$ where $W$ is a non--empty compact subset of $\mathbb{R}^d$. We would like to estimate the value $X(t)$ at a location $t \notin \{t_1,\ldots,t_n\}$. Assume that  $\widehat{X}(t)$ is a linear extrapolator of the random field $X$ such that
    \begin{equation}\label{eq:Extrapolator}
        \widehat{X}(t) =\ \sum^{n}_{j=1}\lambda_{j}X(t_{j})
    \end{equation}
    where $\lambda_1,\ldots,\lambda_n \in \mathbb{R}$ are measurable functions of $t,t_1,\ldots,t_n$. These weights are chosen such that 
    \begin{equation} \label{a0}
        \widehat{X}(t) \stackrel{d}{=} X(0),
    \end{equation}  i.e. $F_{\widehat{X}\left(t\right)}(x) =\ \mathbf{P}(\widehat{X}(t) \leq x)=$ $F_{X}(x)$, $x \in \mathbb{R},$ $t \in W$, as well as $\widehat{X}\left(t\right)$ satisfies the minimization criterion which we are now going to introduce. Since $X$ is  infinitely divisible, the extrapolator $\{ \widehat{X}(t), t\in W\}$ belongs to the same class which makes writing the explicit constraints in \eqref{a0} meaningful and relatively easy.
    Define the \textit{excursion sets} of $\ \left\{X(t), t\in W\right\}$ and $\ \left\{\widehat{X}(t), t\in W\right\}$ for each level $u \in \mathbb{R}$ as
    \begin{equation}\nonumber
        A_{X}(u) =\ \left\{t\in W : X(t) > u\right\} \quad \mbox{and} \quad    A_{\widehat{X}}(u) =\ \left\{t\in W : \widehat{X}(t) > u\right\}.
    \end{equation}
     Since $X$ and $\widehat{X}$ are measurable, the volumes 
    \begin{equation}\nonumber
        v_{d}(A_{X}\left(u\right)) =\ \int_{W}\mathbb{I}\left\{X(t) > u\right\}dt
       \hspace{0.5cm} \mbox{and} \hspace{0.5cm} v_{d}(A_{\widehat{X}}\left(u\right)) =\ \int_{W}\mathbb{I}\left\{\widehat{X}(t) > u\right\}dt
    \end{equation}
    of the excursion sets $A_{X}$ and $A_{\widehat{X}}$ are random variables for each $u \in \mathbb{R}$.
    Consider the volume of the symmetric difference $$A_{X}\left(u\right) \Delta A_{\widehat{X}}\left(u\right) =\ \left(A_{x}\left(u\right) \setminus A_{\widehat{X}}\left(u\right)\right) \cup \left(A_{\widehat{X}}\left(u\right) \setminus A_{X}\left(u\right)\right)$$ 
    as a measure of the error which we make at level $u$ extrapolating $X$ by $\widehat{X}$.
 Fix $k$ different excursion levels $u_{j}$, $j = 1,\ldots,k$. Then the overall mean extrapolation error writes 
   $$        \sum^k_{j=1}\mathbb{E}\left[v_{d}\left(A_{X}(u_{j}) \Delta A_{\widehat{X}}(u_{j})\right)\right]. $$
   
  The extrapolator $\widehat{X}$ (or, equivalently, the choice of weight functions $\lambda_j(t)$, $j=1,\ldots,n$)  has to minimize this error  subject to a set of constraints:
   
   \begin{equation}\label{ex:ExtrProbl}  \begin{split}
    &\sum^k_{j=1}\mathbb{E}\left[v_{d}\left(A_{X}(u_{j}) \Delta A_{\widehat{X}}(u_{j})\right)\right] \hspace{0.5cm}  \longrightarrow \min_{\lambda_1,\ldots,\lambda_n}, \\
    &F_{\widehat{X}(t)}(x) =\ F_{{X}}(x), \hspace{0.5cm} x \in \mathbb{R},  \quad t\in W.   \end{split} 
    \end{equation}
  Let us simplify the target functional above. For that, we find sufficient conditions under which the minimum in \eqref{ex:ExtrProbl} can be attained.
   
   \begin{theorem}\label{thm:ExtrProb}
   {For each $t \in W$, a solution to the  problem}
    \begin{subequations}\label{a12}
    \begin{numcases}{}
    \sum^{k}_{j=1}\mathbf{P}\left(X(t) > u_{j},\widehat{X}(t) > u_{j}\right) \hspace{0.5cm}  \longrightarrow \max_{\lambda_1,\ldots,\lambda_n}\label{a3},\\
    F_{\widehat{X}(t)}(x) =\ F_{{X}}(x), \hspace{0.5cm} x \in \mathbb{R}\label{a11}
    \end{numcases}
    \end{subequations}
    {where $\lambda_1,\ldots,\lambda_n \in \mathbb{R}$ are measurable functions of $t,t_1,\ldots,t_n$,} solves also the problem \eqref{ex:ExtrProbl}.   
   \end{theorem}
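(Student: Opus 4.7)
The plan is to reduce the integrated objective in \eqref{ex:ExtrProbl} to a pointwise problem in $t$, and then observe that the pointwise problem is exactly \eqref{a12}. The starting identity is the pointwise decomposition of the symmetric--difference indicator:
\begin{equation*}
\mathbb{I}\{t \in A_X(u)\Delta A_{\widehat X}(u)\} = \mathbb{I}\{X(t)>u\} + \mathbb{I}\{\widehat X(t)>u\} - 2\,\mathbb{I}\{X(t)>u,\,\widehat X(t)>u\}.
\end{equation*}
Integrating over $W$ and taking expectation, Fubini (justified by measurability of $X$ and $\widehat X$ and boundedness of indicators on the compact $W$) yields
\begin{equation*}
\mathbb{E}\bigl[v_d(A_X(u)\Delta A_{\widehat X}(u))\bigr] = \int_W \!\bigl[\mathbf{P}(X(t)>u) + \mathbf{P}(\widehat X(t)>u) - 2\,\mathbf{P}(X(t)>u,\,\widehat X(t)>u)\bigr]\,dt.
\end{equation*}

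Next, I would invoke the two structural hypotheses simultaneously: stationarity of $X$ gives $\mathbf{P}(X(t)>u)=\overline F_X(u)$, while the marginal constraint \eqref{a11} forces $\mathbf{P}(\widehat X(t)>u)=\overline F_X(u)$ as well. Consequently the first two terms in the integrand collapse into a constant $2\overline F_X(u)$ that does not depend on the weights $\lambda_1,\dots,\lambda_n$. Summing over the $k$ excursion levels gives
\begin{equation*}
\sum_{j=1}^k \mathbb{E}\bigl[v_d(A_X(u_j)\Delta A_{\widehat X}(u_j))\bigr] = 2\,v_d(W)\sum_{j=1}^k \overline F_X(u_j) \;-\; 2\int_W \sum_{j=1}^k \mathbf{P}\bigl(X(t)>u_j,\,\widehat X(t)>u_j\bigr)\,dt.
\end{equation*}
The first summand on the right is a constant (with respect to the $\lambda_j$), so minimising the left--hand side subject to \eqref{a11} is equivalent to maximising the $t$--integral on the right subject to the same constraint.

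Finally, I would argue that if for \emph{each} $t \in W$ the weights $\lambda_1(t),\dots,\lambda_n(t)$ maximise the integrand $\sum_j \mathbf{P}(X(t)>u_j,\,\widehat X(t)>u_j)$ under the univariate distributional constraint \eqref{a11}, then this choice necessarily maximises the integral over $W$, hence solves \eqref{ex:ExtrProbl}. In other words, the pointwise maximiser furnished by \eqref{a12} lifts to a maximiser of the integrated problem simply because both the objective and the constraint \eqref{a11} are separated across $t$. The only delicate point I foresee is ensuring that the pointwise maximiser $t\mapsto(\lambda_1(t),\dots,\lambda_n(t))$ can be chosen measurably in $t$ so that the resulting $\widehat X$ is a measurable random field; this is, however, already incorporated into the assumption of the theorem that the $\lambda_j$ are measurable functions of $t,t_1,\dots,t_n$, so no measurable--selection argument is required beyond that assumption.
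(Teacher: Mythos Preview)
Your proof is correct and follows essentially the same approach as the paper: both start from the indicator decomposition of the symmetric difference, apply Fubini, use stationarity together with the marginal constraint \eqref{a11} to reduce the first two terms to a constant independent of ${\bm\lambda}$, and then observe that pointwise maximisation of the integrand yields a maximiser of the integral. Your remark about measurable selection is an extra observation not present in the paper's proof, but otherwise the arguments coincide.
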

   \begin{proof}
     Integrating
   \[
    \begin{split}
        \mathbb{I}\{A_{X}\left(u\right) \Delta A_{\widehat{X}}\left(u\right)\} &=\ \mathbb{I}\{A_{X}\left(u\right) \setminus A_{\widehat{X}}\left(u\right)\} + \mathbb{I}\{A_{\widehat{X}}\left(u\right) \setminus A_{X}\left(u\right)\} \\ &=\ \mathbb{I}\{X(t)>u\} + \mathbb{I}\{\widehat{X}(t)>u\} - 2\mathbb{I}\{X(t)>u\}\mathbb{I}\{\widehat{X}(t)>u\}
    \end{split} \]over $W$ and  Fubini's theorem  yield
    \begin{equation*} \label{a1}
        \begin{split}
        \sum^k_{j=1}\mathbb{E}\left[v_{d}\left(A_{X}(u_{j}) \Delta A_{\widehat{X}}(u_{j})\right)\right] &=\ \sum^{k}_{j=1}\int_{W}\mathbb{E} \mathbb{I}\{t \in A_{X}(u_{j}) \Delta A_{\widehat{X}}(u_{j}\}) \hspace{0.1cm} dt \\
        &=\ \sum^{k}_{j=1}\int_{W}\mathbf{P} \left( X(t) > u_{j} \right) + \mathbf{P}(\widehat{X}(t) > u_{j}) \\ & - 2\mathbf{P}(X(t) > u_{j},\widehat{X}(t) > u_{j}) \, dt \\
        &=\ \sum^{k}_{j=1}\int_{W}\mathbf{P} \left(X(t) > u_{j} \right)dt + \sum^{k}_{j=1}\int_{W}\mathbf{P} \left(\widehat{X}(t) > u_{j} \right) dt  \\
        & - 2\sum^{k}_{j=1}\int_{W}\mathbf{P}\left(X(t) > u_{j},\widehat{X}(t) > u_{j}\right) dt\,. \\
        \end{split}
    \end{equation*}
  By Fubini's theorem, stationarity of $X$ as well as condition ~(\ref{a0}) we get 
    \newline
    \begin{equation*}\label{a9}
    \begin{split}
        \sum^{k}_{j=1} \int_{W} \mathbf{P}(X(t) > u_{j})dt &+ \sum^{k}_{j=1} \int_{W} \mathbf{P}(\widehat{X}(t) > u_{j})dt \\
        &=\ 2\sum^{k}_{j=1} v_{d}(W)\mathbf{P}(X(0) > u_{j}) 
        =\ 2v_{d}(W)\sum^{k}_{j=1}\left(1 - F_{X}(u_{j})\right)\,.
        \end{split}
    \end{equation*}
    \hfill
    \newline
   Then the target functional in \eqref{ex:ExtrProbl}  reads
     \begin{equation}\label{a10}
    \begin{split}
        \sum^k_{j=1}\textbf{E}\left[v_{d}\left(A_{X}(u_{j}) \Delta A_{\widehat{X}}(u_{j})\right)\right] &=\ 2v_{d}(W)\sum^{k}_{j=1}\left(1 - F_{X}(u_{j})\right) \\
        &- 2\sum^{k}_{j=1}\int_{W}\mathbf{P}\left(X(t) > u_{j},\widehat{X}{\left(t\right)} > u_{j}\right)dt\,.
    \end{split}
    \end{equation}
  The {first sum on the right} does not depend on  ${\bm{\lambda}}=(\lambda_1,\ldots,\lambda_n)^\top$, so we can neglect it. Hence, minimizing  ~(\ref{a10}) w.r.t. ${\bm{\lambda}}$ means maximizing its third sum. So our optimization problem rewrites as 
    \begin{subequations}\label{a13}
    \begin{numcases}{}
    \int_{W}\sum^{k}_{j=1}\mathbf{P}\left(X(t) > u_{j},\widehat{X}(t) > u_{j}\right)dt \hspace{0.5cm}  \rightarrow \max_{{\bm{\lambda}}}, \label{a2} \\ 
    F_{\widehat{X}(t)}(x) =\ F_{{X}}(x), \quad x \in \mathbb{R}, \quad t\in W. \nonumber  \label{a8}
    \end{numcases}
    \end{subequations}
    
The functional ~(\ref{a2}) is maximal if  the sum under the integral in ~(\ref{a2}) is maximal for any $t \in W$.
Thus our final extrapolation problem reads as in \eqref{a12}.
\end{proof}    

\begin{remark}
The formulation of the extrapolation problem in Theorem \ref{thm:ExtrProb} allows for an arbitrary choice of the number $k$ and concrete numerical values  $u_1,\ldots, u_k$ of excursion levels.
Although this choice does not matter at all for Gaussian random fields (cf. Lemma \ref{a19}   below),  it may affect the performance of extrapolation for other infinitely divisible random functions. Thus, the problem of the optimal choice of  parameters $k\in \mathbb{N},$ $u_1,\ldots, u_k\in\mathbb{R}$ arises.  It is very natural  to use the mean $\mu=\mathbb{E} X(t)$ as one of  levels $u_j$ whenever the field $X$ is integrable. However, in general this problem needs a further investigation.   To avoid this discussion, one can replace the sum  in \eqref{ex:ExtrProbl}  by an integral over $\mathbb{R}$ with respect to $u$: 
  \begin{equation}\label{ex:ExtrProbl1} 
   \int_\mathbb{R} \mathbb{E}\left[v_{d}\left(A_{X}(u) \Delta A_{\widehat{X}}(u)\right)\right] du \hspace{0.5cm}  \longrightarrow \min_{\bm{\lambda}}.
        \end{equation}
To ensure the finiteness of the integral on the left handside of \eqref{ex:ExtrProbl1}, it is sufficient to require that $\mathbb{E} |X(0)| <\infty $ and that 
$$
\int_{\mathbb R}\left| {\rm Cov}\left(  \mathbb{I}\left\{X(t) \geq u\right\}, \mathbb{I}\big\{\widehat X(t) \geq u\big\}     \right)  \right| \, du 
$$
is bounded for all $t\in W.$ 
Indeed, we use Fubini theorem to write
  \begin{equation*}\label{eq:auxil}
    \begin{split}  \int_\mathbb{R} \mathbb{E}\left[v_{d}\left(A_{X}(u) \Delta A_{\widehat{X}}(u)\right)\right] du & =2   \int_{W}  \int_\mathbb{R} F_X(u) \left(1- F_X(u)\right) du\, dt \\
    -&2   \int_{W} \int_{\mathbb R}  {\rm Cov}\left(  \mathbb{I}\left\{X(t) \geq u\right\}, \mathbb{I}\big\{\widehat X(t) \geq u\big\}     \right)   \, du\, dt .
        \end{split}
    \end{equation*}
Then it is easy to see that
$$
0\le  \int_\mathbb{R} F_X(u) \left(1- F_X(u)\right) du \le \int_0^{+\infty} \left(1- F_X(u)\right) du +  \int_{-\infty}^0  F_X(u) du=\mathbb{E} |X(0)|.
$$
Using arguments from the proof of Theorem \ref{thm:ExtrProb}, we arrive at the following alternative formulation of the extrapolation problem related to \eqref{ex:ExtrProbl}:

\begin{subequations}\label{eq:diffForm}
    \begin{numcases}{}
    \int_{\mathbb R}  {\rm Cov}\left(  \mathbb{I}\left\{X(t) \geq u\right\}, \mathbb{I}\big\{\widehat X(t) \geq u\big\}     \right)   \, du \hspace{0.5cm}  \rightarrow \max_{{\bm{\lambda}}},  \label{eq:b1} \\ 
    F_{\widehat{X}(t)}(x) =\ F_{{X}}(x), \quad x \in \mathbb{R}, \quad t\in W. \nonumber  \label{eq:b2}
    \end{numcases}
    \end{subequations}
In the case of stationary Gaussian fields, both sufficient conditions are fulfilled, and the formulation \eqref{eq:b1} is equivalent to \eqref{a2}, compare the proof of Lemma \ref{a19}. 
 
\end{remark}

 So far, it is too early to speak about the existence or uniqueness of a solution to the problem \eqref{a12} in such generality. Such analysis only makes sense if the subclass of the infinitely divisible fields under consideration is specified. 
Let us illustrate our new extrapolation approach ~(\ref{a12}) by applying it to stationary Gaussian random fields. These fields, although square integrable, serve as an important benchmark model in extrapolation.  

\section{Extrapolation of Gaussian Random Fields}\label{sect:GaussRF}

Let $X=\left\{X\left(t\right), t \in \mathbb{R}^d \right\}$ be a stationary measurable Gaussian random field with mean $ \mathbb{E}X=\mu$ and covariance function 
$C\left(t\right) = \mathrm{ Cov}\left(X\left(0\right),X\left(t\right)\right) = \mathbb{E}\left[ X\left(0\right)  X\left(t\right)  \right] - \mu^2$,  $t \in \mathbb{R}^d$, 
$\sigma^2:=C\left(0\right)>0 $. For some fixed excursion levels $u_{1},\ldots,u_{k} \in \mathbb{R}$, we extrapolate $X\left(t\right)$, $t \in W \setminus  \left\{t_{1},\ldots,t_{n}\right\}$ by 
\begin{equation}
    \widehat{X}\left(t\right) = \sum_{j=1}^{n} \lambda_{j}\left(t\right) X\left(t_{j}\right),
\end{equation}
where {\bm{$\lambda$} = \bm{$\lambda$}$(t)$= $\left(\lambda_{1}\left(t\right),  \ldots, \lambda_{n}\left(t\right)\right)^{\top}$} maximizes the target functional 
\begin{equation}\label{a15}
    F\left({\bm{\lambda}},t\right) = \sum_{j=1}^{k} \mathbf{P}\left(X\left(t\right) > u_{j}, \widehat{X}\left(t\right)> u_{j}\right)
\end{equation}
 under the (set of) constraint(s) $F_{\widehat{X}(t)}(x) =\Phi_{\mu,\sigma} (x)$, $x \in \mathbb{R} .$
Since $X$ and $\widehat{X}$ have equal $N(\mu,\sigma^{2})-$ distributions, the equality of their variances  reads
\begin{equation}\label{a40}
    {\bm{\lambda}}^{\top}\Sigma{\bm{\lambda}} = \sigma^{2},
\end{equation}
where $\Sigma = \left( C\left(t_{l} - t_{j}\right) \right)_{l,j = 1}^{n}$ is the positive semidefinite covariance matrix of the vector of observations $X(t_j),$ $j=1,\ldots,n$. We will refer to ~(\ref{a40}) as to the {\it ellipsoid constraint}. If $\mu\neq 0$ is assumed to be unknown, the equality of means leads to an additional constraint $\sum_{j = 1}^{n}\lambda_{j} =1.$ In case $\mu=0$  this condition is not needed.
Introduce the notation $$ c_{t} := \left(C\left(t-t_{1}\right),C\left(t-t_{2}\right),\ldots,C\left(t-t_{n}\right)\right)^{\top}.$$

\begin{lemma}\label{a19}
Let $\mu$ be unknown. The optimization problem \eqref{a3}-\eqref{a11} for stationary Gaussian random fields rewrites as follows:
\begin{subequations}\label{a16}
\begin{numcases}{}
\left<{\bm{\lambda}}, c_{t}\right> \rightarrow \max_{{\bm{\lambda}} \in \mathbb{R}^{n}}, \\
{\bm{\lambda}}^{\top} \Sigma {\bm{\lambda}} = \sigma^{2}, \label{eq:Ellips} \\
\langle {\bm{\lambda}}, {\bf e} \rangle =1\label{eq:Simplex}
\end{numcases}
\end{subequations}
for each $t\in W$.  If $\mu=0$ then constraint \eqref{eq:Simplex} can be omitted.
\end{lemma}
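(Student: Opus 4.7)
The plan is to show two things: first, that the distributional constraint \eqref{a11} reduces to the two simple algebraic constraints \eqref{eq:Ellips} and \eqref{eq:Simplex}; second, that maximizing the sum of joint exceedance probabilities in \eqref{a3} is equivalent to maximizing the single linear functional $\langle \bm{\lambda}, c_t\rangle$, regardless of the choice of levels $u_1, \dots, u_k$.

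For the first step, observe that since $X$ is Gaussian and $\widehat{X}(t) = \sum_j \lambda_j X(t_j)$ is linear in the observations, the random variable $\widehat{X}(t)$ is automatically Gaussian. Hence the equality of distributions \eqref{a11} is equivalent to equality of mean and variance. The mean of $\widehat{X}(t)$ equals $\mu \sum_{j=1}^n \lambda_j$, which equals $\mu$ iff $\langle \bm{\lambda}, \mathbf{e}\rangle = 1$ whenever $\mu \neq 0$ is unknown; if $\mu = 0$, this is automatic and can be dropped. The variance of $\widehat{X}(t)$ equals $\bm{\lambda}^\top \Sigma \bm{\lambda}$, yielding \eqref{eq:Ellips}.

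For the second step, I would argue that under the constraints the pair $(X(t), \widehat{X}(t))$ is jointly Gaussian with common mean $\mu$, common variance $\sigma^2$, and covariance
\[
\mathrm{Cov}(X(t), \widehat{X}(t)) = \sum_{j=1}^n \lambda_j C(t - t_j) = \langle \bm{\lambda}, c_t \rangle,
\]
so that the correlation is $\rho(\bm{\lambda}) = \langle \bm{\lambda}, c_t \rangle / \sigma^2$. For a bivariate normal vector with equal marginals $N(\mu, \sigma^2)$, the joint exceedance probability $\mathbf{P}(X(t) > u, \widehat{X}(t) > u)$ is a strictly increasing function of $\rho \in [-1, 1]$ for every $u \in \mathbb{R}$. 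This monotonicity can be obtained either from Slepian's inequality or more directly from Plackett's identity
\[
\frac{\partial}{\partial \rho} \mathbf{P}(X(t) > u, \widehat{X}(t) > u) = \varphi_{\mu,\sigma,\rho}(u,u) > 0,
\]
where $\varphi_{\mu,\sigma,\rho}$ is the bivariate normal density. Consequently each summand in \eqref{a15}, and hence the sum itself, is a strictly increasing function of $\langle \bm{\lambda}, c_t\rangle$ alone. Maximizing \eqref{a15} subject to \eqref{a11} therefore reduces to maximizing the linear functional $\langle \bm{\lambda}, c_t\rangle$ under the same constraints, which is precisely \eqref{a16}.

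The main obstacle is the monotonicity lemma in the second step; everything else is a direct rewriting. Once that monotonicity is invoked, a pleasant by-product of the argument is that the number $k$ of levels and their specific values $u_j$ play no role whatsoever in the Gaussian case, which is exactly what the subsequent remark after the lemma advertises.
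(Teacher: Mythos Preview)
Your proposal is correct and follows the same logical skeleton as the paper: reduce the distributional constraint to equality of mean and variance, then argue that the objective depends on $\bm{\lambda}$ only through the correlation $\rho_t=\sigma^{-2}\langle\bm{\lambda},c_t\rangle$ and is increasing in it.

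The one genuine difference lies in how the monotonicity in $\rho$ is established. The paper invokes an explicit Sheppard-type integral representation
\[
\mathbf{P}\bigl(X(t)>u_j,\widehat X(t)>u_j\bigr)=\overline{\Phi}_{\mu,\sigma}^{2}(u_j)+\frac{1}{2\pi}\int_{0}^{\sin^{-1}\rho_t}\exp\!\left(-\frac{(u_j-\mu)^2}{\sigma^2}\,\frac{1-\sin\theta}{\cos^2\theta}\right)d\theta,
\]
so that the dependence on $\rho_t$ sits in the upper limit of an integral with a positive integrand, and monotonicity is immediate. You instead appeal to Plackett's identity (or Slepian's inequality) to get $\partial_\rho\,\mathbf{P}(X>u,\widehat X>u)=\varphi_{\mu,\sigma,\rho}(u,u)>0$. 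Your route is arguably cleaner and more self-contained, since it does not require locating or quoting the specific bivariate-normal formula, and it makes the strict monotonicity transparent; the paper's route, on the other hand, yields the explicit expression \eqref{eq:TargetF} for $F(\bm{\lambda},t)$, which the paper later reuses when discussing the behaviour of the error across levels $u_j$.
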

\begin{proof}
By \cite[p. 9]{book} we have 
\begin{equation*}
\begin{split}
    \mathbf{P}\left(X\left(t\right)>u_{j},\widehat{X}\left(t\right)>u_{j}\right) &= \overline{\Phi}^{2}_{\mu,\sigma}\left(u_{j}\right) + \frac{1}{2\pi} \int_{0}^{\sin^{-1}\left(\rho_t\right)} \exp\left(-\frac{(u_{j}-\mu)^{2}}{\sigma^2}\frac{1-\sin\left(\theta\right)}{\cos^{2}\left(\theta\right)}\right)d\theta,
    \end{split}
\end{equation*}
where $\rho_{t} = \mathrm{Corr}\left[X\left(t \right), \widehat{X}\left(t \right)\right]$, $t \in W$. Thus, target functional \eqref{a15} rewrites
\begin{equation}\label{eq:TargetF}
     F\left({\bm{\lambda}},t\right) = \sum_{j=1}^{k} \overline{\Phi}_{\mu,\sigma}^{2}\left(u_{j}\right) + \frac{1}{2\pi} \int_{0}^{\sin^{-1}\left(\rho_t\right)}\left[\sum_{j=1}^{k}\exp\left(-\frac{(u_{j}-\mu)^{2}}{\sigma^2}\frac{1-\sin\left(\theta\right)}{\cos^{2}\left(\theta\right)}\right)\right]d\theta .
\end{equation}
To compute $\rho_{t}$, we write
$\rho_{t} = \sigma^{-2}\sum_{i=1}^{k}\lambda_{j}   \mathrm{Cov} \left[X\left(t\right),X\left(t_{j}\right)\right] := \sigma^{-2} \langle{\bm{\lambda}},c_{t}\rangle$. The function
\begin{equation*}
    g\left(\theta\right):= \sum_{j=1}^{k} \exp\left(-\frac{(u_{j}-\mu)^{2}}{\sigma^2}\frac{1-\sin\left(\theta\right)}{\cos\left(\theta\right)}\right)
\end{equation*}
is positive for all $ \theta \in \left[0,\frac{\pi}{2}\right)$. Then
\begin{equation*}
    \int_{0}^{\sin^{-1}\rho_{t}} g\left(\theta\right)d\theta \rightarrow \max_{{\bm{\lambda}}} \hspace{0.2cm} \rm{iff} \hspace{0.2cm} \sin^{-1}\rho_t \rightarrow \max_{{\bm{\lambda}}}.
\end{equation*}
As $\sin^{-1}$ is an increasing function, this is equivalent to
\begin{equation*}
    \rho_{t} \rightarrow \max_{{\bm{\lambda}}}.
\end{equation*}
We arrive at the formulation \eqref{a16}.
\end{proof}

\begin{figure}[!ht]
    \centering
    \begin{subfigure}{0.33\textwidth}
    \resizebox{\textwidth}{!}{%
    \begin{tikzpicture}
    \draw[rotate=45] ellipse(2.35 and 1.4);
    \draw[->, thick] (-3,0) -- (3,0) node (xaxis) [right] {$\lambda_{1}$} ;
    \draw[->, thick] (0,-3) -- (0,3) node (yaxis) [above] {$\lambda_{2}$};
    \draw[<-] (1.37,0.4) node [black,right] {${\bm{\lambda}}$} -- (0,0) node[black,left] {0};
    \draw[<-]  (1,2.6) node [black,right] {$c_{t}$} -- (0,0) ;
    \draw (0.09,0.2) +(15:0.01cm) arc (180:0:0.28cm);
    \draw (0,1.75) node [left]{$1$} -- (1.75,0) node [below]{$1$};
    \path[->] (0.3,0.3) ++(15:0.01cm) node{$\alpha$};
    \path[->] (3,1) ++(15:0.01cm) node{${\bm{\lambda}}^{\top}\Sigma{\bm{\lambda}} = \sigma^{2}$};
    \end{tikzpicture}
    }%
    \caption{Geometric interpretation of  SOCP problem \eqref{a16}  \label{fig:SOCPmunot0}}
    \end{subfigure}
    \hspace{1cm}
    \begin{subfigure}{0.3\textwidth}
    \resizebox{\textwidth}{!}{%
    \begin{tikzpicture}
    \draw[rotate=45] ellipse(2.35 and 1.4);
    \draw[->, thick] (-3,0) -- (3,0) node (xaxis) [right] {$\lambda_{1}$} ;
    \draw[->, thick] (0,-3) -- (0,3) node (yaxis) [above] {$\lambda_{2}$};
    \draw[<-] (1.95,0.8) node [black,right] {${\bm{\lambda}}$} -- (0,0) node[black,left] {0};
    \draw[<-]  (1,2.6) node [black,right] {$c_{t}$} -- (0,0) ;
    \draw (0.09,0.2) +(15:0.01cm) arc (180:0:0.28cm);
    \path[->] (0.3,0.3) ++(15:0.01cm) node{$\alpha$};
    \path[->] (2.6,2) ++(15:0.01cm) node{${\bm{\lambda}}^{\top}\Sigma{\bm{\lambda}} = \sigma^{2}$};
    \end{tikzpicture}
    }%
    \caption{Illustration of  SOCP problem  \eqref{a14} with $\mu=0$ \label{fig:SOCPmu0} }
    \end{subfigure}
    \label{ellipsoid}
    \caption{Geometrical interpretation of problems \eqref{a16} and \eqref{a14}}
\end{figure}
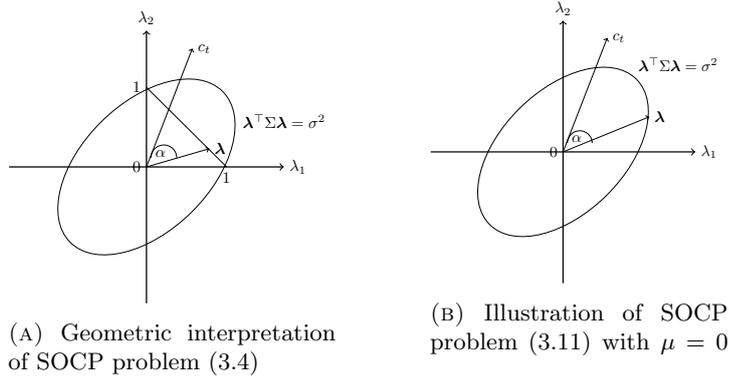

Notice that the target functional $\left<{\bm{\lambda}}, c_{t}\right> =  \| \mbox{Pr}_{c_t} {\bm{\lambda}} \| \cdot \|  c_t \| $, where $\mbox{Pr}_{y}$ is the operator of the orthogonal projection onto the line with direction vector $y\in\mathbb{R}^n$, cf. Figure~\ref{fig:SOCPmunot0}.  Lemma \ref{a19} poses our extrapolation problem as a linear programming problem with quadratic constraints which appears to be a special case of a SOCP (second order cone programming)  or a QCQP (quadratically constrained quadratic program) class, cf. e.g. \cite{AliGold03,BoydVan04}. It can be solved via the Lagrangian formalism. For that, we need the following notation.  
Introduce numbers $b_0 := c_t^{\top} \Sigma^{-1}c_t,$ $b_1 := {\bf e}^{\top} \Sigma^{-1}c_t,$ $b_2:={\bf e}^{\top}\Sigma^{-1} {\bf e}$.

\begin{theorem}\label{thm:ExactSolMuUnknown}
Let    $\mu \neq 0$ be unknown,  $c_t$ be non-collinear to ${\bf e},$ $c_t\neq \mathbf{0},$ and $\Sigma$ be  positive definite. 
Then there exists a unique solution to the problem  \eqref{a16}   which is given by
    \begin{equation} \label{a7}
     {\bm{\lambda}}= \sqrt{\frac{\sigma^2 b_2-1}{b_0b_2-b_1^2}}\Sigma^{-1}\left(c_t-\frac{b_1}{b_2}{\bf e}\right)+\frac{1}{b_2}\Sigma^{-1}{\bf e}. \end{equation}
\end{theorem}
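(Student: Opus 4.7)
The natural approach is classical Lagrangian optimization for an equality-constrained problem. I form the Lagrangian
\begin{equation*}
L(\bm{\lambda},\alpha,\beta)=\langle \bm{\lambda},c_t\rangle-\tfrac{\alpha}{2}\bigl(\bm{\lambda}^\top\Sigma\bm{\lambda}-\sigma^2\bigr)-\beta\bigl(\langle\bm{\lambda},{\bf e}\rangle-1\bigr),
\end{equation*}
and read off the stationarity condition $c_t-\alpha\Sigma\bm{\lambda}-\beta{\bf e}=\mathbf{0}$. Since $\Sigma$ is positive definite and hence invertible, this gives the candidate form $\bm{\lambda}=\alpha^{-1}\Sigma^{-1}(c_t-\beta{\bf e})$, which reduces the problem to determining the scalar multipliers $\alpha$ and $\beta$.

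Substituting this candidate into the two constraints and using the notation $b_0,b_1,b_2$ introduced above the theorem, the hyperplane condition \eqref{eq:Simplex} becomes $\alpha=b_1-\beta b_2$, while the ellipsoid condition \eqref{eq:Ellips} expands to $\sigma^2\alpha^2=b_0-2\beta b_1+\beta^2 b_2$. Eliminating $\alpha$ produces a quadratic in $\beta$ which, after simplification, has the two real roots
\begin{equation*}
\beta_{\pm}=\frac{b_1}{b_2}\pm\frac{1}{b_2}\sqrt{\frac{b_0b_2-b_1^2}{\sigma^2 b_2-1}},\qquad\alpha_{\pm}=\mp\sqrt{\frac{b_0b_2-b_1^2}{\sigma^2 b_2-1}}.
\end{equation*}
The non-collinearity of $c_t$ and ${\bf e}$, via Cauchy--Schwarz in the $\Sigma^{-1}$ inner product, gives $b_0b_2-b_1^2>0$, and feasibility of the problem (verifiable by minimizing $\bm{\lambda}^\top\Sigma\bm{\lambda}$ under $\langle\bm{\lambda},{\bf e}\rangle=1$, whose minimum value is $1/b_2$) forces $\sigma^2 b_2\geq 1$, so the square root is real.

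Evaluating the objective $\langle\bm{\lambda},c_t\rangle=\alpha^{-1}(b_0-\beta b_1)$ at the two candidate points shows that the branch with $\alpha>0$ (equivalently $\beta=\beta_-$) yields the strictly larger value and is therefore the maximizer, while the other critical point is the minimizer. Since the feasible set is a compact lower-dimensional ellipsoid and the objective is linear, these two stationary points are the only extrema; hence the maximum is unique. Substituting $\alpha=\alpha_+$ and $\beta=\beta_-$ back into $\bm{\lambda}=\alpha^{-1}\Sigma^{-1}c_t-(\beta/\alpha)\Sigma^{-1}{\bf e}$ and using the identity $(b_1/b_2-\beta_-)/\alpha_+=1/b_2$ reassembles the expression into precisely the form \eqref{a7}.

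The main obstacle is the sign bookkeeping: the quadratic in $\beta$ has two roots whose corresponding $\alpha$ values differ in sign, and one must carefully identify which branch gives the maximum and which the minimum, then verify algebraically that the maximizing branch telescopes into the tidy form stated. The existence/uniqueness assertions themselves are comparatively routine given the Cauchy--Schwarz bound $b_0b_2-b_1^2>0$ and the compactness of the feasible ellipse.
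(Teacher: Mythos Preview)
Your approach is essentially identical to the paper's: both use the Lagrangian with two multipliers, solve the stationarity equation for $\bm{\lambda}$ in terms of the multipliers, substitute into the two constraints to obtain a quadratic, verify via Cauchy--Schwarz in the $\Sigma^{-1}$ inner product that $b_0b_2-b_1^2>0$, argue that $\sigma^2 b_2>1$ from the minimum of $\bm{\lambda}^\top\Sigma\bm{\lambda}$ on the hyperplane, and then select the correct root by comparing objective values. The compactness argument for existence is likewise the same.

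One small notational slip: you define $\alpha_{\pm}=\mp\sqrt{(b_0b_2-b_1^2)/(\sigma^2 b_2-1)}$, so in your labelling $\alpha_-$ is the positive root paired with $\beta_-$. In the final paragraph you then write ``substituting $\alpha=\alpha_+$ and $\beta=\beta_-$'' and invoke the identity $(b_1/b_2-\beta_-)/\alpha_+=1/b_2$; both of these require the \emph{positive} multiplier, which under your convention is $\alpha_-$, not $\alpha_+$. The mathematics is correct once the subscript is fixed. You might also sharpen $\sigma^2 b_2\ge 1$ to a strict inequality (which follows since non-collinearity forces $n\ge 2$ and positive definiteness then rules out equality), as the formula \eqref{a7} needs $\sigma^2 b_2-1>0$ in the denominator under the square root.
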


\begin{proof}

Let ${\bf e}_j$, $j=1,\ldots,n$ be the orthonormal basis vectors of $\mathbb{R}^n$. 
Let us denote by $K$ the set of admissible vectors ${\bm{\lambda}}$ given by equations \eqref{eq:Ellips}--\eqref{eq:Simplex}. This compact set is not empty since ${\bf e}_j\in K$, $j=1,\ldots,n$. Due to positivedefiniteness of the matrix $\Sigma$,  $K$ is the boundary of an ellipsoid of dimension $n-1$ which lies in the hyperplane \eqref{eq:Simplex}. The linear functional $\left<{\bm{\lambda}}, c_{t}\right>$ is continuous and thus attains its minimum and its maximum on $K$ which are distinct one from another unless $\left<{\bm{\lambda}}, c_{t}\right>$ is constant on $K$.

We write the Lagrangian for the linear programming problem \eqref{a16} as 
\begin{equation*}
    \zeta\left({\bm{\lambda}},\gamma,\delta\right) = \langle c_{t}, {\bm{\lambda}}\rangle + \gamma\left({\bm{\lambda}}^{\top}\Sigma{\bm{\lambda}} - \sigma^{2} \right) + \delta\left(\langle {\bf e}, {\bm{\lambda}}\rangle - 1\right),
\end{equation*}
where $\gamma$, $\delta$ are Lagrange multipliers. 
Differentiating partially with respect to ${\bm{\lambda}}$, we get the gradient
\begin{equation*}\label{a20}
    \nabla_{{\bm{\lambda}}}\zeta({\bm{\lambda}},\gamma,\delta) = c_t+2\gamma \Sigma{\bm{\lambda}} +\delta {\bf e}=0.
\end{equation*}
which gives that 
\begin{equation}\label{a21}
    2\gamma{\bm{\lambda}}=-\Sigma^{-1}(c_t+\delta {\bf e}).
\end{equation}
Note that $\Sigma^{-1}$ exists and is symmetric. The left-hand side of \eqref{a21} equals ${\bf 0}$ if and only if $c_t=-\delta {\bf e}.$ Due to constraints, $\gamma  \neq 0$ iff $c_t\neq  - \delta {\bf e}.$ From condition \eqref{eq:Simplex} we get that 
$$
{\bf e}^{\top}\Sigma^{-1}(c_t+\delta {\bf e})=b_1+\delta b_2=-2\gamma, 
$$
and
\begin{equation}\label{a22}  {\bm{\lambda}}=\frac{\Sigma^{-1}(c_t+\delta {\bf e})}{{\bf e}^{\top}\Sigma^{-1}(c_t+\delta {\bf e})}
\end{equation}
assuming that $b_1+\delta b_2\neq 0.$
We find $\delta$ plugging \eqref{a22} into \eqref{eq:Ellips}  which leads to
\begin{equation*}
(c_t^{\top}+\delta {\bf e}^{\top})\Sigma^{-1}\Sigma\Sigma^{-1}(c_t+\delta {\bf e})=\sigma^2({\bf e}^{\top}\Sigma^{-1} c_t+\delta {\bf e}^{\top}\Sigma^{-1}  {\bf e})^2
\end{equation*}
and thus yields 
$$b_2(\sigma^2 b_2-1)\delta^2+2b_1( b_2\sigma ^2-1)+\sigma^2 b_1^2 - b_0=0,$$
or,  equivalently, 
\begin{equation}
\label{eq:b}
(b_2\delta+b_1)^2=\frac{b_0b_2-b_1^2}{\sigma^2 b_2-1}.
\end{equation}
Solving the minimization problem $x^{\top}\Sigma x \to \min$ subject to $x^{\top}y=1,$ we find that $y^{\top}\Sigma^{-1}y=\left(\min_{x^{\top}y=1}x^{\top}\Sigma x\right)^{-1}.$  Therefore, we have that $1/b_2\leq \frac{1}{n^2}{\bf e}^{\top}\Sigma {\bf e}<\sigma^2$ and $\sigma^2 b_2-1>0.$ 
Since $\Sigma^{-1}$ is positive definite, then 
$$0\leq (\sqrt{b_2}c_t-\sqrt{b_0}{\bf e})^{\top}\Sigma^{-1} (\sqrt{b_2}c_t-\sqrt{b_0}{\bf e})=b_2b_0-2\sqrt{b_0 b_2}b_1+b_0 b_2=2\sqrt{b_0 b_2}(\sqrt{b_0 b_2}-b_1)$$ and $b_0b_2\geq b_1^2.$ Moreover, $b_0b_2-b_1^2=0$ iff $c_t$ is parallel to ${\bf e}.$
Thus, equation \eqref{eq:b} has always solutions
\begin{equation}
\label{eq:delta12}
\delta_1=-\frac{b_1}{b_2}+\frac{1}{b_2}\sqrt{\frac{b_0b_2-b_1^2}{\sigma^2 b_2-1}}, \quad \delta_2=-\frac{b_1}{b_2}-\frac{1}{b_2}\sqrt{\frac{b_0b_2-b_1^2}{\sigma^2 b_2-1}}
\end{equation}
such that $b_1+\delta_{1,2} b_2\neq 0$ if $b_0b_2\neq b_1^2.$
The corresponding values of  $$c_t^{\top}\bm{\lambda}_{1,2}=(b_0+\delta_{1,2} b_1)/(b_1+\delta_{1,2} b_2)$$ are
$$c_t^{\top}\bm{\lambda}_{1}=\frac{b_1}{b_2}+\frac{1}{b_2}\sqrt{(b_0 b_2-b_1^2)(\sigma^2 b_2-1)},\quad c_t^{\top}\bm{\lambda}_{2}=\frac{b_1}{b_2}-\frac{1}{b_2}\sqrt{(b_0 b_2-b_1^2)(\sigma^2 b_2-1)}.$$
Obviously, $c_t^{\top}\bm{\lambda}_{1}\geq c_t^{\top}\bm{\lambda}_{2}$ and the maximizer in \eqref{a16} is
\begin{align*}
    {\bm{\lambda}}&=\sqrt{\frac{\sigma^2 b_2-1}{b_0b_2-b_1^2}}\Sigma^{-1}\left(c_t-\frac{b_1}{b_2}{\bf e}\right)+\frac{1}{b_2}\Sigma^{-1}{\bf e}=\sqrt{\sigma^2-b_2^{-1}}\frac{\Sigma^{-1}\left(c_t-\frac{b_1}{b_2}{\bf e}\right)}{\sqrt{   c_t^{\top}\Sigma^{-1} \left(c_t-\frac{b_1}{b_2}{\bf e}\right) }}+\frac{\Sigma^{-1}{\bf e}}{b_2}.
\end{align*}

\end{proof}

{By Lemma \ref{a19}, the weight vector  $\bm{\lambda}$ maximises all the probabilities \eqref{a3}  in Theorem \ref{thm:ExtrProb}, so  the problem is solved for all levels $u$ in the Gaussian case.}

\begin{remark}
\hfill
\newline
\begin{enumerate}
    \item In formula \eqref{a7}, vectors $c_t-\frac{b_1}{b_2}{\bf e}$ and ${\bf e}$ are orthogonal. Indeed, 
\begin{equation*}\begin{split}
{\bf e}^{\top}\left(c_t-\frac{b_1}{b_2}{\bf e}\right)&=b_2^{-1}(b_2 {\bf e}^{\top}c_t -n b_1)
=b_2^{-1}({\bf e}^{\top}\Sigma^{-1}{\bf e} {\bf e}^{\top}c_t -n{\bf e}^{\top}\Sigma^{-1}c_t)\\
&=b_2^{-1}(  {\bf e}^{\top}{\bf e} {\bf e}^{\top}\Sigma^{-1} {c_t}  -n {\bf e}^{\top}\Sigma^{-1}{c_t})=0,
\end{split}\end{equation*}
since symmetric matrices ${\bf e} {\bf e}^{\top}$  and $\Sigma^{-1}$ commute.
\item
If $c_t$ is parallel to ${\bf e},$ then the
maximization functional ${c_t}^{\top} {\bm{\lambda}} $ is constant under the condition ${\bf e}^{\top} {\bm{\lambda}}=1.$ Therefore, there are many solutions to the problem  \eqref{a16}, namely,  these are all vectors $\bm{\lambda}$ satisfying \eqref{eq:Ellips} and \eqref{eq:Simplex}.
For example, ${\bm{\lambda}}={\bf e}_k$, $k=1,\ldots,n$ are admissible.
\item
Under the assumptions of Theorem  \ref{thm:ExactSolMuUnknown}, the extrapolator $\widehat{X}(t)=\sum_{j=1}^n \lambda_j X(t_j)$ is exact, that is, $\widehat{X}(t_{{j}})=X(t_{{j}})$ for all ${j}=1,\ldots, n$. In this case, $c_{t_{{j}}}=\Sigma {\bf e}_{{j}}$ and $b_0={\bf e}_{{j}}^{\top}\Sigma \Sigma^{-1} \Sigma {\bf e}_{{j}}=C(t_{{j}}-t_{{j}})=\sigma^2,$ $b_1={\bf e}^{\top}\Sigma^{-1}\Sigma  {\bf e}_{{j}}=1.$ Then \eqref{a7} rewrites
$${\bm{\lambda}}= \sqrt{\frac{\sigma^2 b_2-1}{\sigma^2 b_2-1}}\Sigma^{-1}\left(c_{t_{{j}}}-\frac{1}{b_2}{\bf e}\right)+\frac{1}{b_2}\Sigma^{-1}{\bf e}=\Sigma^{-1}c_{t_{{j}}}={\bf e}_{{j}}.$$
\end{enumerate}
\end{remark}

\begin{example}\label{ex:n2}
For $n=2$, we have
$
    \widehat{X}\left(t\right) = \lambda_{1} X\left(t_{1}\right) + \lambda_{2}X\left(t_{2}\right) .  
$
Under the assumptions of Theorem \ref{thm:ExactSolMuUnknown},   
 there exists a unique vector of weights $\bm{\lambda}=(\lambda_1,\lambda_2)$ satisfying \eqref{a16}   if  $C(t-t_1) \neq C(t-t_2)$. Since the admissible ellipsoid $K$ of values 
$\bm{\lambda}$ in two dimensions is trivial, i.e.,  $K=\{ (1,0), (0,1)  \} $, we have the following solutions:
\begin{equation*}
\begin{split}
     {\bm{\lambda}}=(1,0) & \mbox{ if }  C(t-t_1)>C(t-t_2),\\
       {\bm{\lambda}}=(0,1) & \mbox{ if }  C(t-t_1)<C(t-t_2),\\
       {\bm{\lambda}}=(0,1) \mbox{ or } (1,0)   & \mbox{ if }  C(t-t_1)=C(t-t_2).\\
        \end{split}
\end{equation*}
\end{example}

Now, let us turn to the case of a centered Gaussian random field $X$, i.e. if $\mu=0$. This case is less realistic from the point of view of real applications, since the drift $\mu$ is usually unknown and has to be estimated. However, after being estimated, it can be subtracted from the field itself making it approximately centered. For $\mu=0$, the extrapolation optimization problem   in Lemma \ref{a19} can be stated without constraint \eqref{eq:Simplex} making life much more simple: for any $t\in W$
\begin{subequations}\label{a14}
\begin{numcases}{}
\left< {\bm{\lambda}}, c_{t} \right > \rightarrow \max_{\bm{\lambda} \in \mathbb{R}^{n}},   \\
{\bm{\lambda}}^{\top}\Sigma {\bm{\lambda}} =\sigma^2. 
\end{numcases}
\end{subequations}

{Since the target functional equals $\left<{\bm{\lambda}}, c_{t}\right> =  \| \mbox{Pr}_{c_t} {\bm{\lambda}} \| \cdot \|  c_t \| $, where $\mbox{Pr}_{y}$ is the operator of the orthogonal projection onto the line with direction vector $y\in\mathbb{R}^n$ (cf. Figure \ref{fig:SOCPmu0}), any point on the boundary of the ellipsoid is feasible.}
\begin{theorem}\label{thm:ExtrMMueq0}
Let $\mu = 0$. If $\Sigma$ is a positive definite matrix, and $c_t\neq \mathbf{0},$  then the above linear programming problem has a unique solution $\bm{\lambda}=(\lambda_1,\ldots,  \lambda_n)^\top $ for each $t\in W$  which reads
\begin{equation}\label{eq:ExtrMethMu0}
    {\bm{\lambda}} = \sigma\frac{ \Sigma^{-1}c_t}{\sqrt{c_t^{\top}\Sigma^{-1}c_t}}.
\end{equation}
\end{theorem}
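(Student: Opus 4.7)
The plan is to mimic the argument of Theorem \ref{thm:ExactSolMuUnknown} but without the simplex constraint, which simplifies everything considerably. Existence is immediate: the set $K=\{\bm{\lambda}\in\mathbb{R}^n:\bm{\lambda}^\top\Sigma\bm{\lambda}=\sigma^2\}$ is compact (it is the boundary of an ellipsoid, since $\Sigma$ is positive definite) and the target functional $\langle\bm{\lambda},c_t\rangle$ is continuous, so a maximum is attained.

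To identify the maximizer I would use the Lagrangian
\begin{equation*}
\zeta(\bm{\lambda},\gamma)=\langle c_t,\bm{\lambda}\rangle+\gamma\bigl(\bm{\lambda}^\top\Sigma\bm{\lambda}-\sigma^2\bigr),
\end{equation*}
whose stationarity condition $\nabla_{\bm{\lambda}}\zeta=c_t+2\gamma\Sigma\bm{\lambda}=\mathbf{0}$ yields $\bm{\lambda}=-(2\gamma)^{-1}\Sigma^{-1}c_t$; note $\gamma\neq 0$ because $c_t\neq\mathbf{0}$. Substituting into the ellipsoid constraint \eqref{eq:Ellips} determines $\gamma$ up to sign via $4\gamma^2\sigma^2=c_t^\top\Sigma^{-1}c_t$, and the two candidates differ only by a global sign. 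Evaluating $\langle\bm{\lambda},c_t\rangle$ on each, the positive value $\sigma\sqrt{c_t^\top\Sigma^{-1}c_t}$ is the maximum and corresponds precisely to the formula \eqref{eq:ExtrMethMu0}.

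A cleaner alternative, which I would probably include as a remark or as the main argument, is a direct Cauchy--Schwarz estimate in the inner product induced by $\Sigma$: write $\langle\bm{\lambda},c_t\rangle=\langle\Sigma^{1/2}\bm{\lambda},\Sigma^{-1/2}c_t\rangle$ to obtain
\begin{equation*}
\langle\bm{\lambda},c_t\rangle\le\sqrt{\bm{\lambda}^\top\Sigma\bm{\lambda}}\,\sqrt{c_t^\top\Sigma^{-1}c_t}=\sigma\sqrt{c_t^\top\Sigma^{-1}c_t},
\end{equation*}
with equality iff $\Sigma^{1/2}\bm{\lambda}$ is a positive scalar multiple of $\Sigma^{-1/2}c_t$, i.e.\ $\bm{\lambda}=\kappa\Sigma^{-1}c_t$ for some $\kappa>0$. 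This realizes the upper bound and, combined with the constraint, fixes $\kappa=\sigma/\sqrt{c_t^\top\Sigma^{-1}c_t}$.

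The main obstacle compared with Theorem \ref{thm:ExactSolMuUnknown} is essentially trivial: uniqueness, which there required the extra argument that $K$ not degenerate to a point, here follows at once from the equality case of Cauchy--Schwarz together with the sign choice that maximizes (rather than minimizes) the linear form. Positive definiteness of $\Sigma$ is used to guarantee both that $\Sigma^{-1/2}$ exists and that $c_t^\top\Sigma^{-1}c_t>0$ whenever $c_t\neq\mathbf{0}$, so the denominator in \eqref{eq:ExtrMethMu0} is nonzero.
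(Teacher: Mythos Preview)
Your Lagrangian argument is essentially identical to the paper's proof: the paper writes the same $\zeta(\bm{\lambda},\gamma)$, solves $2\gamma\bm{\lambda}=-\Sigma^{-1}c_t$, substitutes into the ellipsoid constraint to obtain $4\gamma^2\sigma^2=c_t^\top\Sigma^{-1}c_t$, displays the two candidates $\bm{\lambda}_{1,2}=\pm\sigma\,\Sigma^{-1}c_t/\sqrt{c_t^\top\Sigma^{-1}c_t}$, and selects the one with the larger objective value. Your explicit compactness remark for existence is a small addition the paper leaves implicit.

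The Cauchy--Schwarz alternative you propose is a genuinely different and cleaner route that the paper does not use. Its advantage is that existence, the optimal value, and uniqueness all fall out of the equality case of a single inequality, with no need to set up Lagrange multipliers or argue separately that the stationary points exhaust the extrema on the compact constraint set. The Lagrangian route, by contrast, generalizes more mechanically to the situation of Theorem~\ref{thm:ExactSolMuUnknown} where the additional affine constraint makes a one-line Cauchy--Schwarz argument unavailable; that is presumably why the paper keeps the two proofs parallel.
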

\begin{proof}
The Lagrangian for this problem has the form 
 $$\zeta\left({\bm{\lambda}},\gamma\right) = \langle c_{t}, {\bm{\lambda}}\rangle + \gamma\left({\bm{\lambda}}^{\top}\Sigma{\bm{\lambda}} - \sigma^{2} \right),
$$
where $\gamma$ is a Lagrange multiplier.
The gradient with respect to ${\bm{\lambda}}$ equals 
$ \nabla_{{\bm{\lambda}}}\zeta({\bm{\lambda}},\gamma) = c_t+2\gamma \Sigma{\bm{\lambda}},$ which equals zero if $2\gamma {\bm{\lambda}} = - \Sigma^{-1}c_t.$ From \eqref{eq:Ellips}
we have that $\frac{1}{4\gamma^2} c_t^{\top}\Sigma^{-1}\Sigma\Sigma^{-1}c_t=\sigma^2$ which gives two extremal points
$$\bm{\lambda}_{1,2}=\pm \sigma\frac{\Sigma^{-1}c_t}{\sqrt{c_t^{\top}\Sigma^{-1}c_t}}.$$
The values of maximization functional are
$c_t^{\top}\bm{\lambda}_{1,2}=\pm \sigma \sqrt{c_t^{\top}\Sigma^{-1}c_t}.$ Obviously,  $\bm{\lambda}_1$ is its maximizing point.
\end{proof}

\begin{remark}
\hfill
\newline
\begin{enumerate}
    \item  Extrapolation method \eqref{eq:ExtrMethMu0} is exact as well. Indeed, let $c_t=c_{t_k}=\Sigma{\bf e}_k,$ then
     $c_{t}^{\top}\Sigma^{-1}c_{t}={\bf e}_k^{\top}\Sigma\Sigma^{-1}\Sigma{\bf e}_k=\sigma^2,$ and ${\bm{\lambda}}=\sigma\frac{\Sigma^{-1}\Sigma{\bf e}_k}{\sigma}={\bf e}_k.$
    \item Extrapolation methods \eqref{a7} and  \eqref{eq:ExtrMethMu0}   differ from the   ordinary or simple  kriging. There, the kriging estimator is \begin{equation}
       \label{X:kr} \tilde{X}(t)=\sum_{j=1}^n \lambda_{\mathrm{kr},j}(t)X(t_j),
    \end{equation} and
    ${\bm{\lambda}}_{\mathrm{kr}}=(\lambda_{\mathrm{kr},1},\ldots,\lambda_{\mathrm{kr},n})^{\top}=\Sigma^{-1}(c_t+\delta_{\mathrm{kr}} {\bf e})$ minimizes the functional $$\mathbf{E}(\Tilde{X}(t)-X(t))^2={\bm{\lambda}}^{\top}\Sigma {\bm{\lambda}}+\sigma^2-2c_t^{\top}{\bm{\lambda}},$$ where
     $\delta_{\mathrm{kr}}=(1-b_1)/b_2$ in the ordinary kriging case  and $\delta_{\mathrm{kr}}=0$ in the simple kriging case,  cf. \cite[p. 23,84]{wackernagel2013multivariate}, \cite[p. 155,167]{chil1999geostatistics}. Now it is sufficient to compare ${\bm{\lambda}}_{\mathrm{kr}}$ with \eqref{a7} rewritten as
     \begin{equation*} 
     {\bm{\lambda}}= \sqrt{  \frac{\sigma^2 b_2-1}{b_0b_2-b_1^2}  }\Sigma^{-1}c_t    + \frac{1-     \sqrt{\frac{\sigma^2 b_2-1}{b_0 b_1/ b_2-1}}   }{b_2} \Sigma^{-1}{\bf e}
     \end{equation*}
and with \eqref{eq:ExtrMethMu0} in the form
\begin{equation}\label{eq:ExtrMethMu0_re}
    {\bm{\lambda}} =\frac{ \sigma}{\sqrt{c_t^{\top}\Sigma^{-1}c_t}}  \Sigma^{-1}c_t.
\end{equation}  
Notice that ${\bm{\lambda}}$ in \eqref{eq:ExtrMethMu0_re} and ${\bm{\lambda}}_{\mathrm{kr}}=\Sigma^{-1}c_t$ for simple kriging are proportional.
    For instance, if $n=1$ we can compute the simple kriging estimate as
    \begin{equation*}\label{a38}
        \Tilde{X}\left(t\right) = \mathbb{E}\left(X\left(t\right) \vert X\left(t_{1}\right) \right)\\
        = \mu +    \frac{C(t-t_1)}{C(0)}  \left(X\left(t_{1}\right) - \mu\right),
    \end{equation*}
   compare e.g. \cite[Theorem 2, p. 238]{shi96}. 
   We see that $\Tilde{X}\left(t\right) \neq  \widehat{X}\left(t\right)$, 
    where $   \widehat{X}\left(t\right) = \lambda_1 (t) X\left(t_{1}\right) $ is our predictor from \eqref{a7} and  \eqref{eq:ExtrMethMu0} with
    \begin{equation*}\label{a39}
        \lambda_1(t) = \begin{cases}
        1, & \mu \mbox{ unknown},\\
        \mbox{\rm sgn} \left( C(t-t_1)\right), & \mu=0  \mbox{ known}. \end{cases}
    \end{equation*}
    \item The expected mean square error of extrapolation $\mathbb{E}\left[\widehat{X}\left(t\right)-X(t)\right]^2 =2(\sigma^2-c_t^{\top}\bm{\lambda})$ equals 
  $$\mathbb{E}\left[\widehat{X}\left(t\right)-X(t)\right]^2 =
        2 \left( \sigma^2-\frac{b_1}{b_2}-\frac{1}{b_2}\sqrt{(b_0 b_2-b_1^2)(\sigma^2 b_2-1)}  \right)
  $$
  in case of unknown $\mu$ and
    $$\mathbb{E}\left[\widehat{X}\left(t\right)-X(t)\right]^2 =     
        2\sigma\left(\sigma- \sqrt{c_t^{\top}\Sigma^{-1}c_t}\right) 
$$
for $\mu=0$.
\end{enumerate}

\end{remark}

Now we would like to discuss the consistency of our extrapolation methods. Namely, we prove that, under some additional assumptions on the covariance function $C$ of $X$, it holds $\widehat X (t) \to X(t) $ as $n\to\infty$ in mean square (and thus in stochastic) sense if the observation design $t_1,\ldots,t_n$ is asymptotically dense around the point $t$.

\begin{theorem}\label{thm:Consist}
Let the covariance function $C$ be continuous and positive definite, and $\min_{{j}=1,\ldots, n}\|t_{{j}}-t\|\to 0$ as $n\to \infty.$ 
\begin{itemize}
\item[(i)]\label{thm:Consist1} For $\mu$ either known  ($\mu=0$) or unknown, it holds
$$
\mathbb{E}\left[\widehat{X}\left(t\right)-X(t)\right]^2  \longrightarrow 0, \quad n\to\infty. 
$$
\item[(ii)]\label{thm:Consist2} Let now $t_{{j}}\in T_N= (h_N\mathbb{Z})^d\cap W$ for ${j}=1,\ldots, n(N),$ where $h_N>0$ is a mesh size and $n(N)$ is the number of points in $T_N.$
If $C$ is  H\"{o}lder continuous at 0 with index $\alpha>0$ and $\sum_{N=1}^{\infty}h_N^\alpha<\infty,$ then 
$$
\widehat{X}\left(t\right)  \longrightarrow X(t), \quad N\to\infty, \text{ a.s.} 
$$
\end{itemize}
\end{theorem}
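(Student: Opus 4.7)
The plan is to exploit the optimality of $\widehat{X}(t)$: since our method maximises $\langle \bm{\lambda}, c_t\rangle$ over the admissible set $K$ defined by the variance (and, when $\mu$ is unknown, the affine) constraint, its mean square error is dominated by the MSE of any competing admissible vector. The natural competitor I would use is the canonical basis vector $\mathbf{e}_{j^{*}}$, where $t_{j^{*}}$ is the observation point closest to $t$. It is admissible in both the $\mu=0$ case and the unknown-$\mu$ case, since $\mathbf{e}_{j^{*}}^{\top}\Sigma\mathbf{e}_{j^{*}}=C(0)=\sigma^{2}$ and $\mathbf{e}^{\top}\mathbf{e}_{j^{*}}=1$. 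Recalling from the remark after Theorem \ref{thm:ExtrMMueq0} that any admissible weight vector $\bm{\lambda}$ produces $\mathbb{E}[\widehat{X}(t)-X(t)]^{2}=2(\sigma^{2}-\langle \bm{\lambda},c_t\rangle)$, optimality gives the uniform bound
\[
\mathbb{E}[\widehat{X}(t)-X(t)]^{2}\;\le\;2\bigl(\sigma^{2}-\langle \mathbf{e}_{j^{*}},c_t\rangle\bigr)\;=\;2\bigl(C(0)-C(t-t_{j^{*}})\bigr),
\]
in both settings. The remainder of the argument just converts this geometric bound into the two modes of convergence claimed.

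For part (i), the hypothesis $\min_{j}\|t_{j}-t\|\to 0$ allows me to select $j^{*}=j^{*}(n)$ with $\|t_{j^{*}(n)}-t\|\to 0$. Continuity of $C$ at the origin then yields $C(t-t_{j^{*}(n)})\to C(0)=\sigma^{2}$, so the upper bound tends to zero, proving $L^{2}$-consistency.

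For part (ii), I would take $t_{j^{*}(N)}$ to be the nearest point of $T_{N}$ to $t$: for $N$ large enough this lies in $W$ and satisfies $\|t-t_{j^{*}(N)}\|\le\tfrac{\sqrt{d}}{2}h_{N}$. H\"{o}lder continuity of $C$ at $0$ with exponent $\alpha$ provides a constant $L>0$ such that $C(0)-C(h)\le L\|h\|^{\alpha}$ for small $h$, whence
\[
\mathbb{E}[\widehat{X}_{N}(t)-X(t)]^{2}\;\le\;L'\, h_{N}^{\alpha}
\]
for some $L'>0$ depending on $d,L$. The summability hypothesis $\sum_{N}h_{N}^{\alpha}<\infty$ therefore gives $\sum_{N}\mathbb{E}[\widehat{X}_{N}(t)-X(t)]^{2}<\infty$. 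Chebyshev's inequality then yields $\sum_{N}\mathbf{P}(|\widehat{X}_{N}(t)-X(t)|>\varepsilon)<\infty$ for every $\varepsilon>0$, and the Borel--Cantelli lemma, applied along the countable sequence $\varepsilon=1/k$, $k\in\mathbb{N}$, delivers the almost sure convergence $\widehat{X}_{N}(t)\to X(t)$.

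The only technical subtlety I anticipate is making sure that the nearest lattice point of $(h_{N}\mathbb{Z})^{d}$ actually belongs to $W$ (and hence qualifies as one of the observation points whose index can serve as $j^{*}(N)$): for $t$ in the interior of $W$ this is automatic for all sufficiently large $N$, but for boundary points a short geometric argument or mild regularity assumption on $W$ is needed. Once this minor point is settled, the whole proof is essentially a one-line optimality comparison plus Borel--Cantelli.
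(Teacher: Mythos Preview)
Your proposal is correct and follows essentially the same route as the paper: compare the optimal $\bm{\lambda}$ against the admissible competitor $\mathbf{e}_{j^{*}}$ to get $\mathbb{E}[\widehat{X}(t)-X(t)]^{2}\le 2(C(0)-C(t-t_{j^{*}}))$, then use continuity of $C$ for part~(i) and H\"older continuity plus summability and Borel--Cantelli for part~(ii). Your treatment is in fact slightly more careful, since you spell out the Chebyshev step and flag the boundary issue for the nearest lattice point, both of which the paper passes over silently.
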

\begin{proof} (i):
As it was mentioned in the proof of Theorem \ref{thm:ExactSolMuUnknown}, any ${\bm{\lambda}}={\bf e}_{{j}},$ ${j}=1,\ldots,n$ is admissible, i.e., ${\bf e}_{{j}}^{\top}\Sigma{\bf e}_{{j}}=\sigma^2$ and ${\bf e}_{{j}}^{\top}{\bf e}=1.$ Thus, maximum values of ${\bm{\lambda}}^{\top} c_t$ in optimization problems \eqref{a16} and \eqref{a14} are larger or equal than
${\bf e}_{{j}}^{\top} c_t=C(t-t_{{j}}),$ for any ${j}=1,\ldots,n.$ 
{Taking ${j}(n)=\mathrm{arg}\min_{l=1\ldots,n}\|t_l-t\|$, we have from the continuity of $C$ that 
}
{
$$\mathbb{E}\left[\widehat{X}\left(t\right)-X(t)\right]^2 =2(\sigma^2-{\bm{\lambda}}^{\top} c_t)\leq 2\sigma^2-2C(t-t_{j(n)})\to 0$$
}
{
as $n\to 0.$} Then the first statement is proved.


(ii): Let now $t_{{j}}\in T_N,$ ${j}=1,\ldots, n(N).$ Then  $\|t_{{j}(N)}-t\|\leq \sqrt{d} h_N/2.$ 
If, additionally, $C$ is H\"{o}lder continuous at 0 with index $\alpha$, then there exists  constants $K,D>0$ such that $|C(0)-C(x)|\leq K \|x\|^\alpha$ for all $\|x\|\leq D$ and 
$$\sum_{N=1}^{\infty} \mathbb{E}\left[\widehat{X}\left(t\right)-X(t)\right]^2 \leq 2 K \sum_{N=1}^{\infty} \|t-t_{{j}(N)}\|^{\alpha} \leq 2^{1-\alpha} d^{\alpha/2} K \sum_{N=1}^{\infty} h_N^{\alpha}<\infty.$$
Thus, it follows from the Borel--Cantelli lemma that $\widehat{X}(t)\to X(t)$  a.s. as $N\to \infty$.
\end{proof}

\begin{remark}\label{rem:DenseObservation}
It follows from the above proof that under the assumptions of Theorem \ref{thm:Consist} (ii) 
$$ \mathbb{E}\left[\widehat{X}\left(t\right)-X(t)\right]^2 \leq   2K \min_{{j}=1,\ldots, n}\|t_{{j}}-t\|^{\alpha}.$$
That is, the speed of convergence of a path of $\widehat{X}$ to that of $X$ is slower for Gaussian processes with more rough paths which is reflected by a higher constant $K$ or by a smaller value of $\alpha$. This point will be illustrated on numerical experiments in the next section.

\end{remark}

\section{Numerical Simulation}\label{sect:NumSim}

To compute solutions  \eqref{a7} and  \eqref{eq:ExtrMethMu0}  numerically, the inversion of matrix $\Sigma$ has to be replaced by the numerical solution of the corresponding system of linear equations e.g. by using the QR decomposition. Thus, one has to find $\bm{\lambda}$ out of
\begin{equation*}\label{eq:ExtrMethMuNot0_re1}
   \Sigma{\bm{\lambda}} =\sqrt{  \frac{\sigma^2 b_2-1}{b_0b_2-b_1^2}  }c_t    + \frac{1-     \sqrt{\frac{\sigma^2 b_2-1}{b_0 b_1/ b_2-1}}   }{b_2} {\bf e}, \quad \mu \mbox{  unknown},
\end{equation*}  
or
\begin{equation*}\label{eq:ExtrMethMu0_re1}
   \Sigma {\bm{\lambda}} =\frac{ \sigma}{\sqrt{c_t^{\top} \Sigma^{-1} c_t}}  c_t, \quad \mu=0 \mbox{  known},
\end{equation*}  
respectively. In statistical practice, the covariance function $C$ of the field $X$ has to be first estimated  from the data $X(t_1),\ldots, X(t_n)$. To ensure the positive definiteness of  matrix $\Sigma$, a valid covariance model with this property has to be fitted to the estimated covariance $\widehat C$ by a least squares method as it is usually done with kriging in geostatistical applications. In this section, however, we assume the covariance function $C$ to be known a priori not to bother with these well known issues. 

For numerical simulations, we choose the one-dimensional case $d=1$ due to visualization reasons. The above methods are dimension free and work for any $d>1$ in the same way. {The corresponding R code for $d=1,2,3$ in available at \cite{Rcode}.}
Let $X = \left\{X\left(t\right), t\in \mathbb{R}\right\}$ be a stationary Gaussian process with standard normal marginals, i.e. $\mu=0$, $\sigma^2=1$. We choose different covariance functions $C$, simulate the Gaussian process $X$ on the interval $[0,100]$ and measure its values at different points $T_n=(h \mathbb{Z})\cap[0,100],$ where $h$ is e mesh size. 

Thus, for the exponential covariance  function  $C_1\left(t\right) = \exp{\left(-\left\vert t \right\vert\right)}$, $t\in \mathbb{R}$  we successively observe the process $X$ at  locations $T_n$ with mesh sizes $h=10\, (n=10),$ $h=1\, (n=100),$ and $h=0.2 \,(n=500),$  respectively.
For the Gaussian covariance  $C_2\left(t\right) =  \exp{\left(-\frac{t^{2}}{2}\right)}$, $t\in \mathbb{R}$ we used $h=10\,(n=10)$ and $h=1\, (n=100).$ For the Bessel  covariance $C_3(t)= J_{0}(t) = \sum_{s = 0}^{\infty}\frac{\left(-1\right)^{s}\left(\frac{t}{2}\right)^{2s}}{s ! \Gamma \left(s+1\right)}$, $t \in \mathbb{R}$ and oscillating sine covariance $C_4(t) =\frac{\sin \left(t\right)}{t}$, $t \in \mathbb{R}$  we measure $X$ at $T_n$ with mesh sizes $h=10\, (n=10),$ $h=5\, (n=20),$ and $h=2.5 \,(n=40).$

Then we perform our extrapolation on a regular grid in $[0,100]$ with mesh size $0.1$ and compare the values  $\widehat{X}(t)$ with  $X(t)$ on that grid by taking the length of the symmetric difference of the excursion sets of $\widehat{X}$ and  $X$ at levels $u_j \in \left\{-2,-1,0,1,2\right\}$.  The results of the extrapolation of the Gaussian process $X$ with the four covariance structures as above for low observation density (based on ten measurements) are given in Figure \ref{com-small}. The exactness of the extrapolation at observation points is seen directly from the graphs of $X$ and $\widehat{X}$.

As mentioned in Remark \ref{rem:DenseObservation}, increasing the density of observations leads to better extrapolation which is controlled by the H\"older constants $K$ and $\alpha$. This can be seen in Figure \ref{com-dense} with forty observation points. There, the predictors coincide with the realisation of $X$ in the sine ($K=1/6$, $\alpha=2$) and Bessel ($K=1/4$, $\alpha=2$) case, and are very close to $X$ for the Gaussian  ($K=1$, $\alpha=2$) covariance function.  For $n=100$ observations,  the curves for $X$ and $\widehat{X}$ become indistinguishable also in the latter case.  
Figure \ref{expo-expo} illustrates the increase in resolution of the observation grid for the Gaussian process $X$ with exponential ($K=\alpha=1$)  covariance function. The paths of $X$ are more rough which is reflected by smaller $\alpha$ and larger $K$.  In accordance with Remark \ref{rem:DenseObservation}, the acceptable quality of extrapolation is reached at higher frequencies  $n=100, 500$ of observations.


\begin{figure}[!]
    \centering
    \begin{subfigure}{\textwidth}
    \includegraphics[height=0.19\textheight, width = \linewidth]{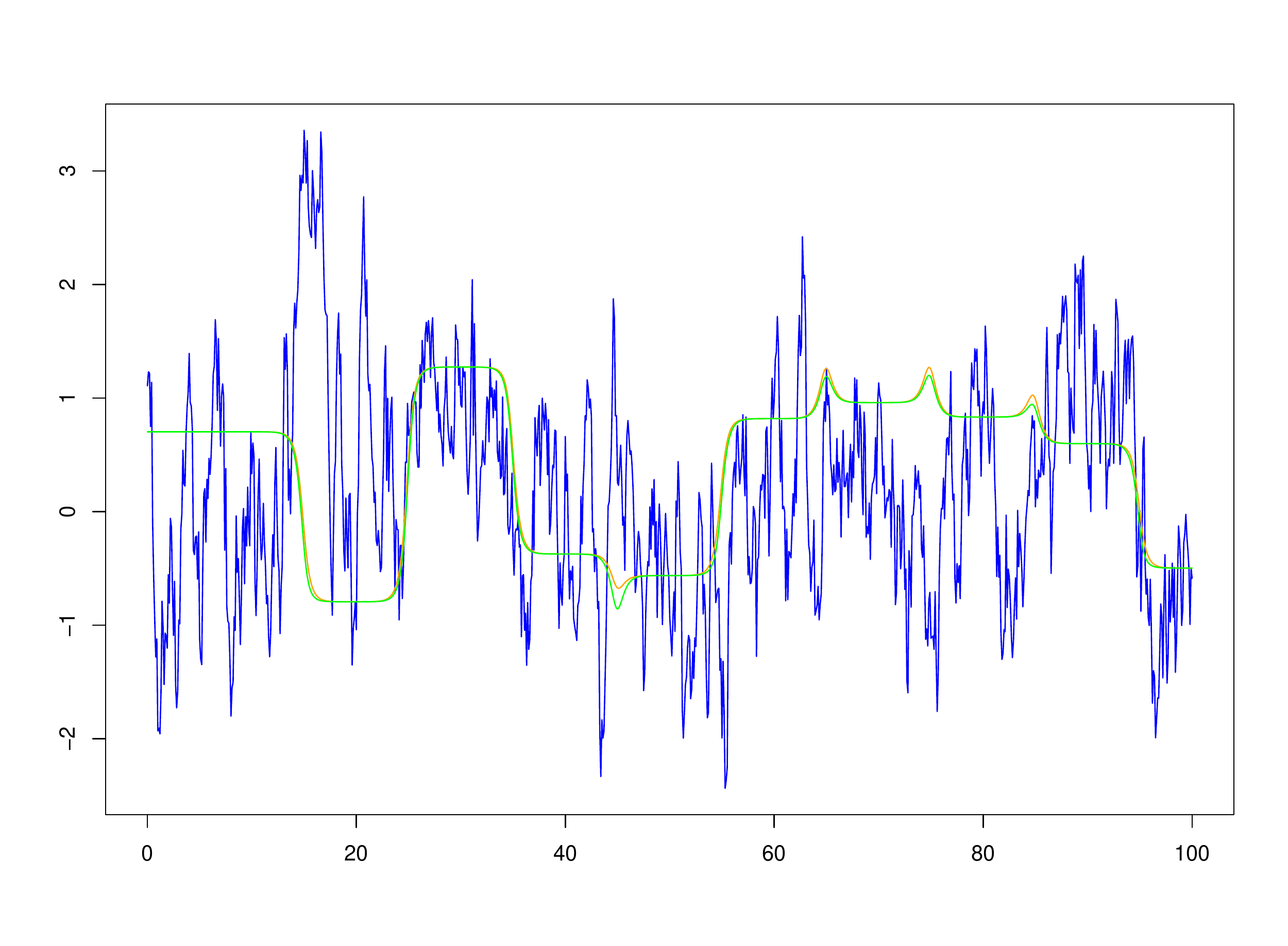}
    \caption{$X$ with exponential covariance $C_1$, observed at $t_j=10j$, $j=1,\ldots,10$.}
    \label{expo-n-10}
    \end{subfigure}
    \begin{subfigure}{\textwidth}
    \includegraphics[height=0.19\textheight, width = \linewidth]{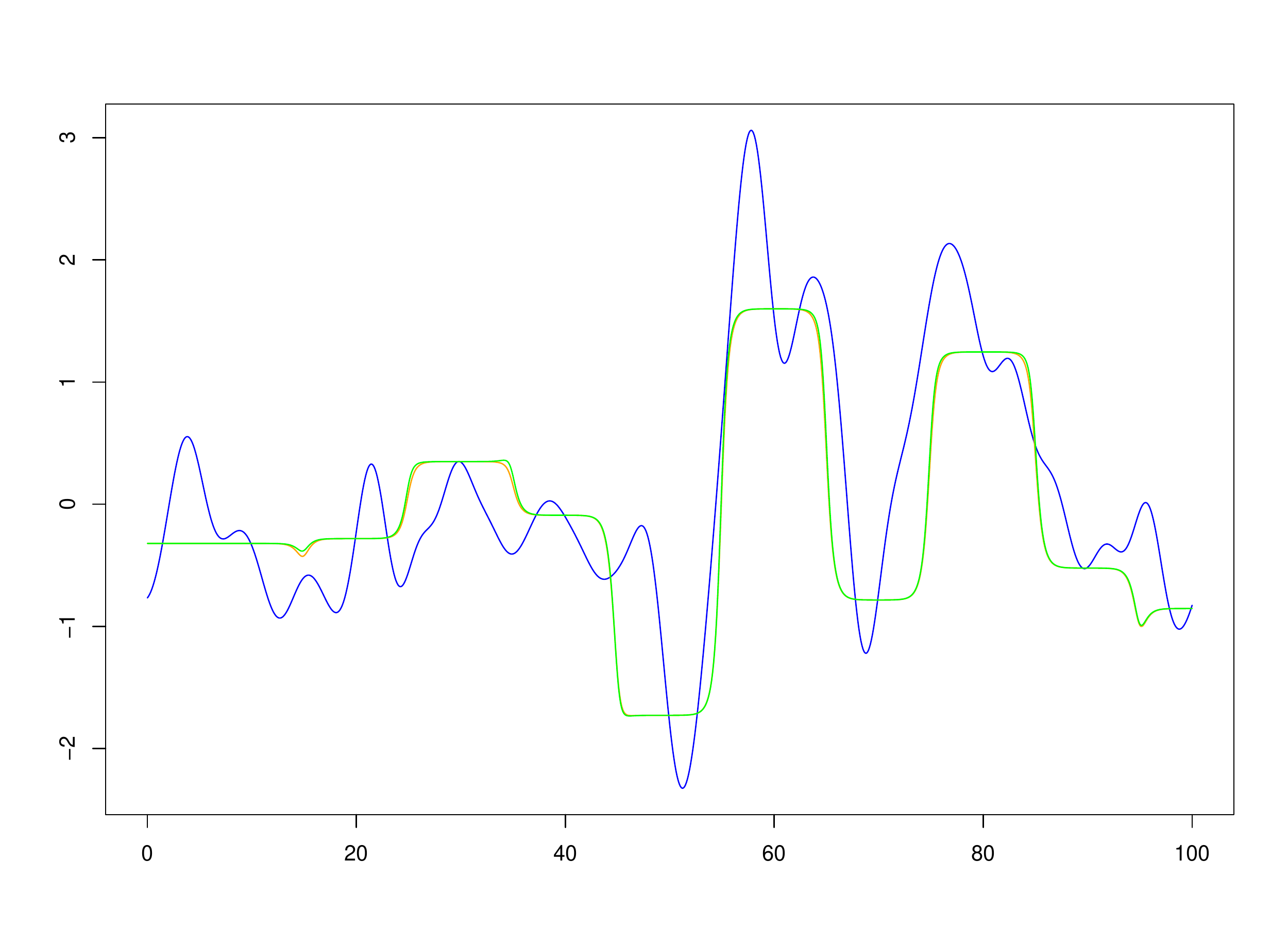}
    \caption{$X$ with Gaussian covariance $C_2$, observed at $t_j=10j$, $j=1,\ldots,10$.}
    \label{gaus-n-10}
    \end{subfigure}
    \begin{subfigure}{\textwidth}
    \includegraphics[height=0.19\textheight, width = \linewidth]{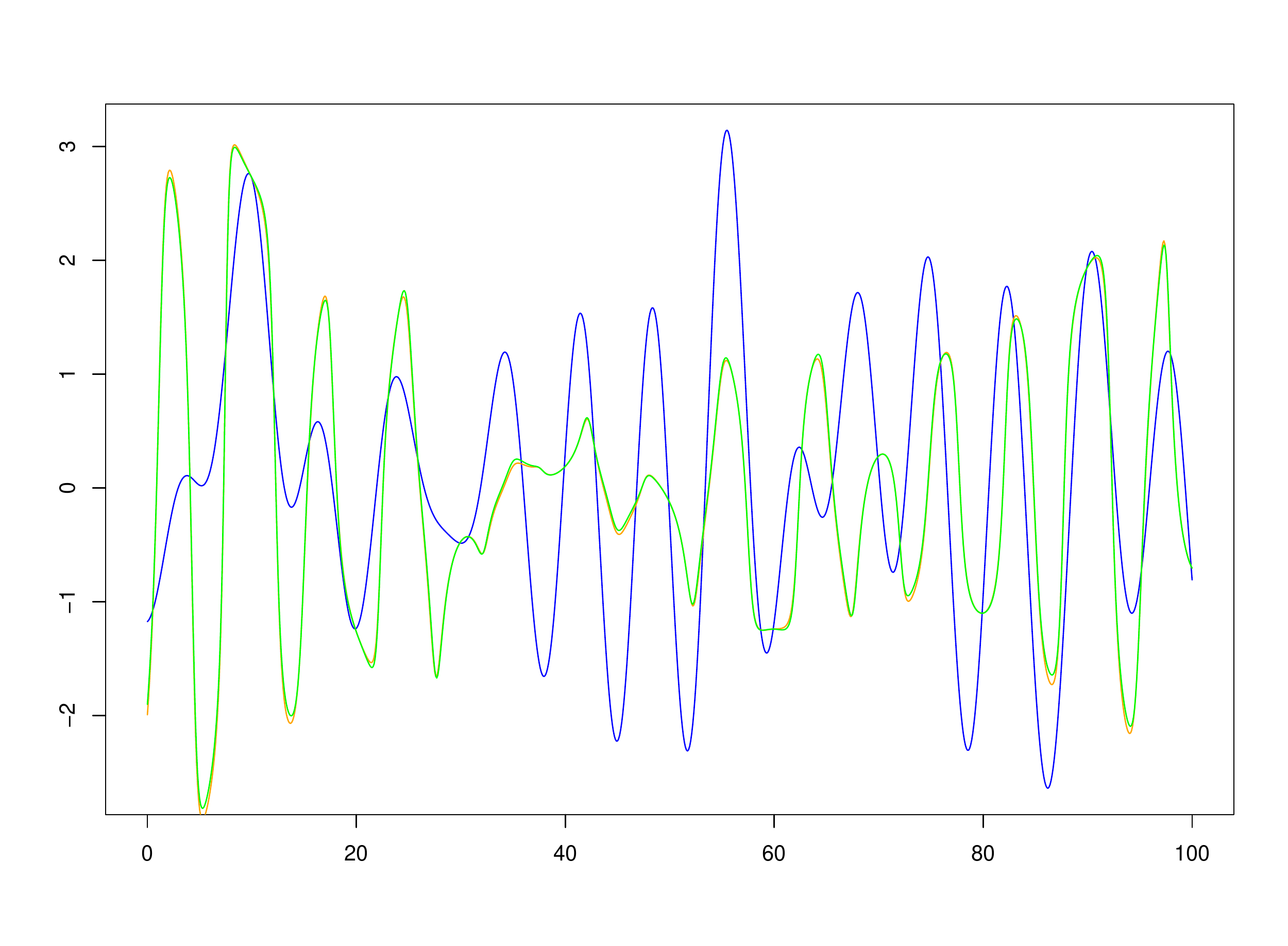}
    \caption{$X$ with Bessel covariance $C_3$, observed at $t_j=10j$, $j=1,\ldots,10$.}
    \label{bessel-n-10}
    \end{subfigure}
    \begin{subfigure}{\textwidth}
    \includegraphics[height=0.19\textheight, width = \linewidth]{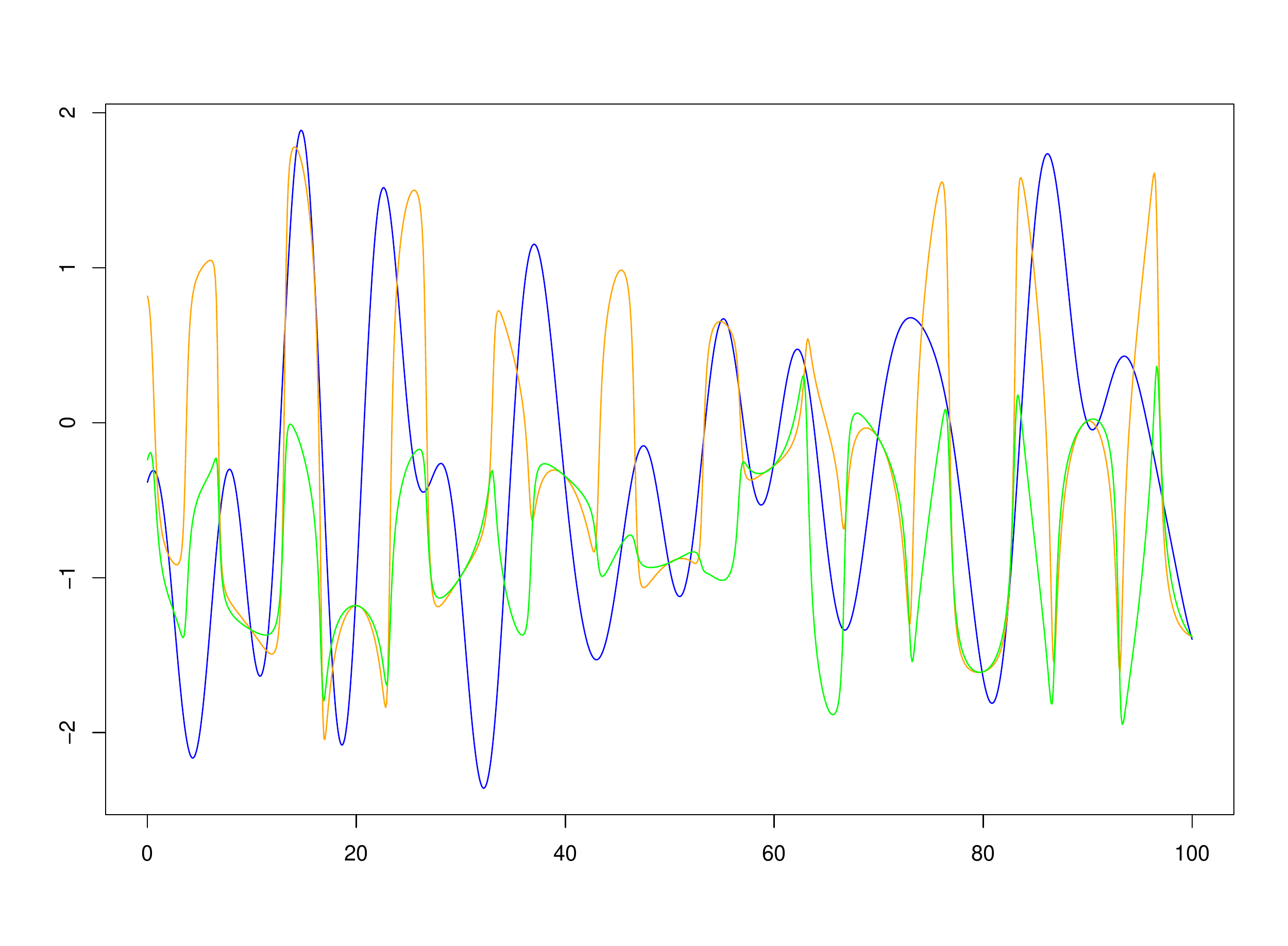}
    \caption{$X$ with oscillating sine covariance $C_4$, observed at $t_j=10j$, $j=1,\ldots,10$.}
    \label{sine-n-10}
    \end{subfigure}
    \caption{\small{Comparison of trajectories of the Gaussian process $X$ (blue) and linear predictor $\widehat{X}$ with known mean $\mu=0$ (orange) and unknown mean (green)  out of ten of observations.
    The covariance structure $C$ of $X$ is chosen to be  exponential  (\ref{expo-n-10}), Gaussian (\ref{gaus-n-10}), {Bessel  (\ref{bessel-n-10}) or sine (\ref{sine-n-10}).}}}
    \label{com-small}
\end{figure}


\begin{figure}[!]
    \centering
    \begin{subfigure}{\textwidth}
    \includegraphics[height=0.25\textheight, width = \linewidth]{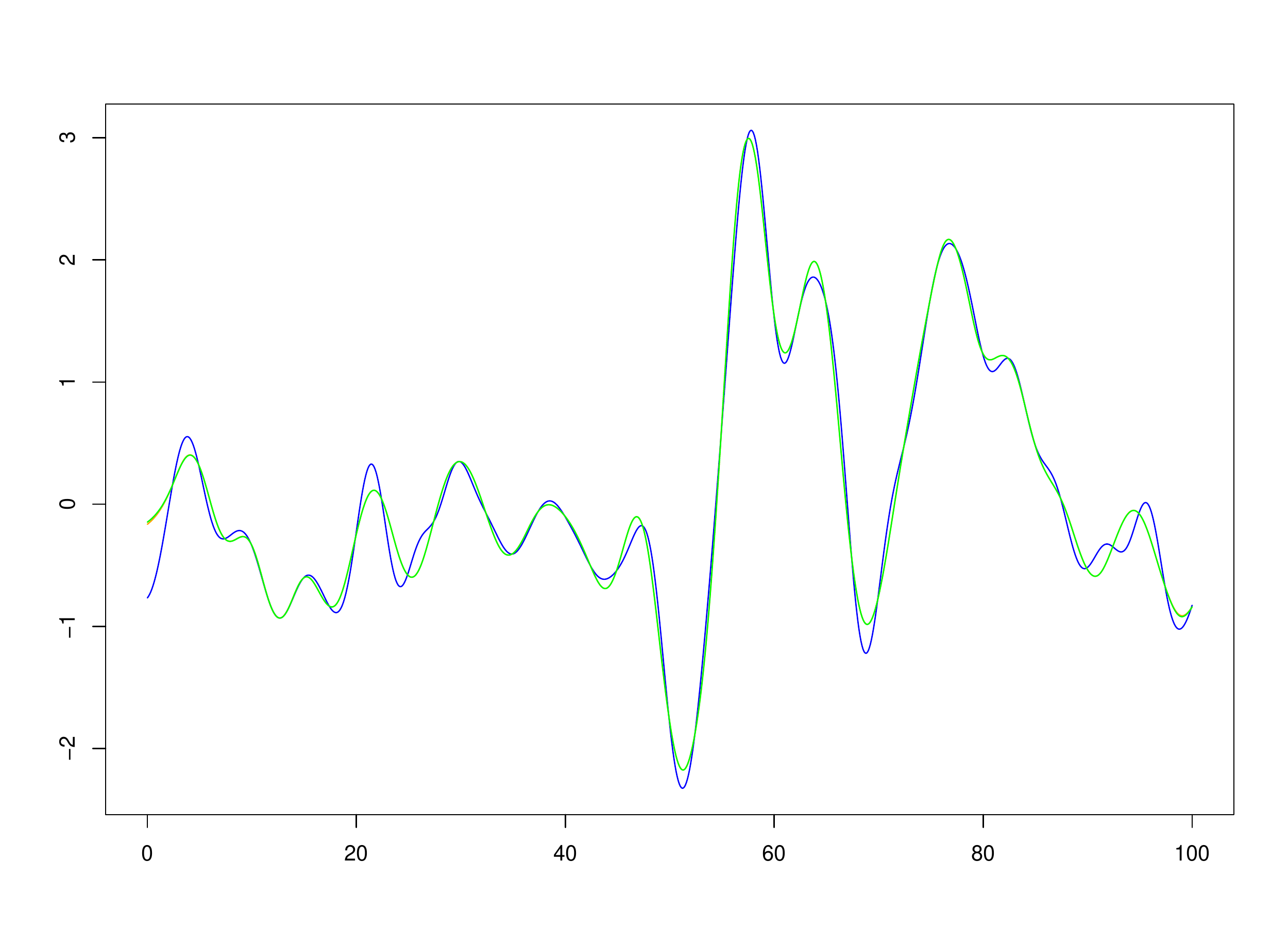}
    \caption{Gaussian covariance $C_2$, observed at $t_j= {2.5}j$, $j=1,\ldots,40$}
    \label{gauss-n-40}
    \end{subfigure}
    \newline
    \begin{subfigure}{\textwidth}
    \includegraphics[height=0.25\textheight, width = \linewidth]{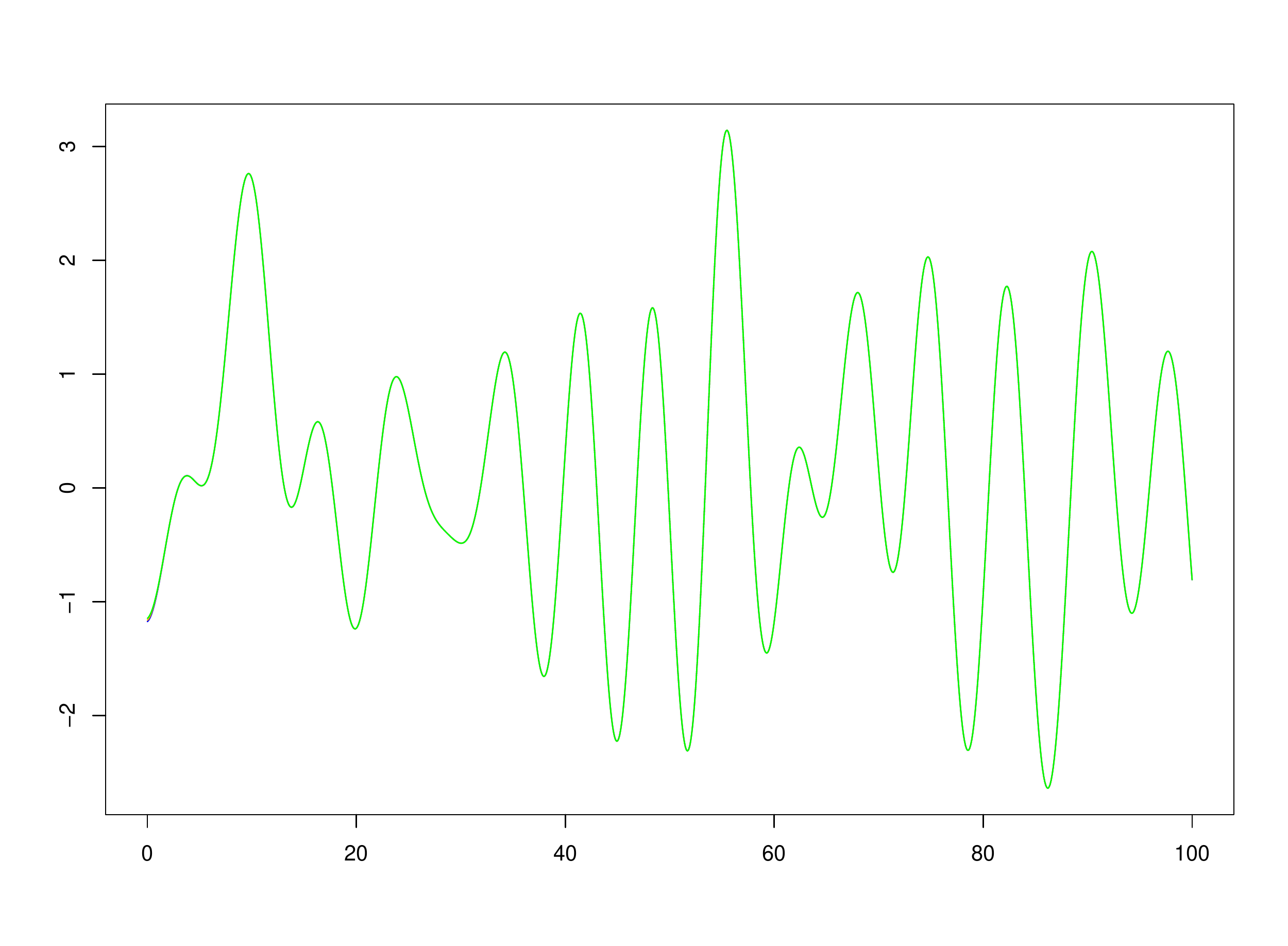}
    \caption{Bessel covariance $C_3$, observed at $t_j=2.5j$, $j=1,\ldots,40$}
    \label{bess-n-40}
    \end{subfigure}
    \newline
    \begin{subfigure}{\textwidth}
    \includegraphics[height=0.25\textheight, width = \linewidth]{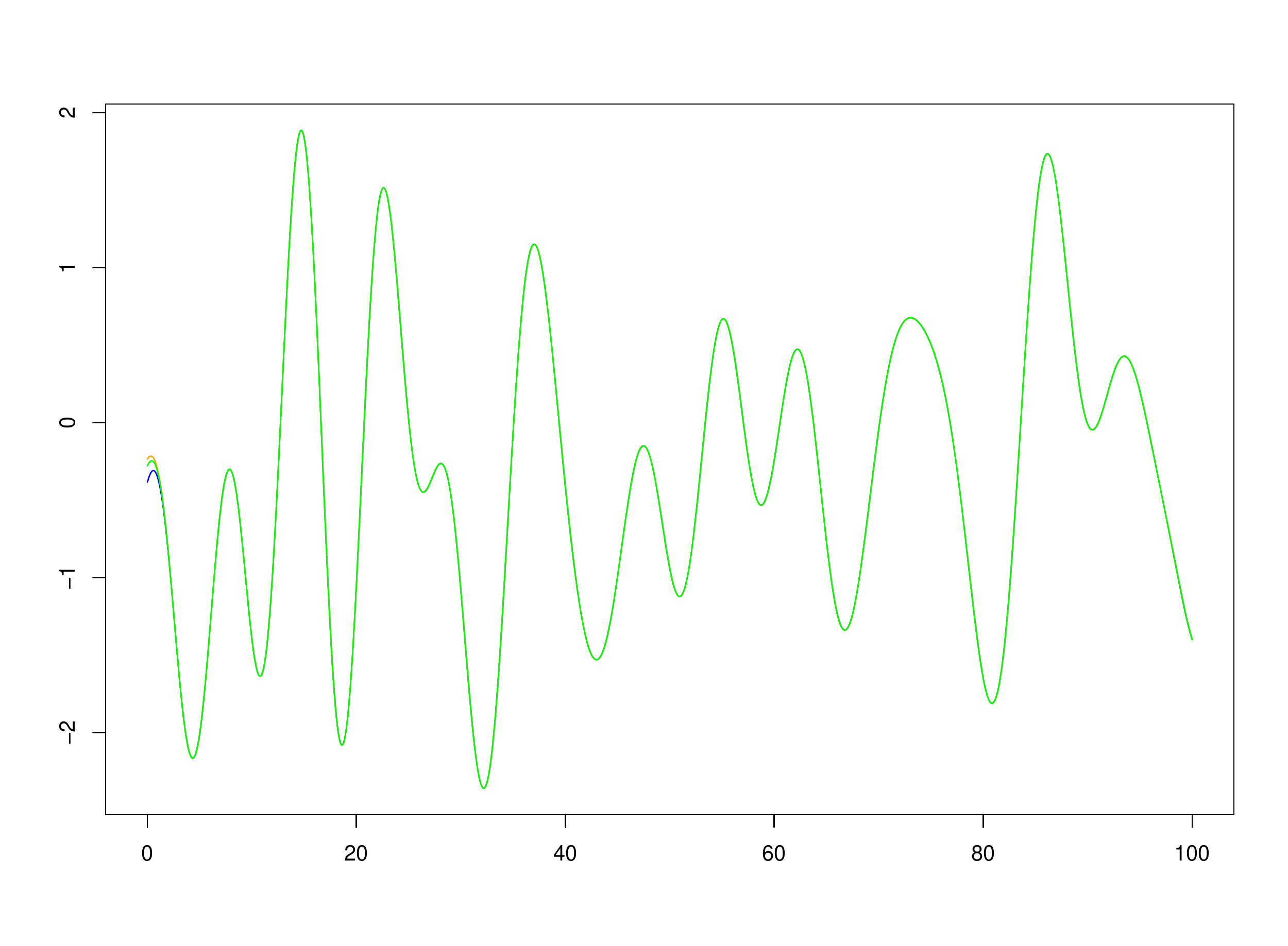}
    \caption{Sine covariance $C_4$, observed at $t_j=2.5j$, $j=1,\ldots,40$}
    \label{sin-n-40}
    \end{subfigure}
    \caption{\small{Comparison of trajectories of Gaussian process $X$ (blue) and its predictor $\widehat{X}$   with known mean $\mu=0$ (orange) or unknown mean (green) observed at $40$ locations. $X$ has Gaussian  (\ref{gauss-n-40}), {Bessel (\ref{bess-n-40}) or  sine  (\ref{sin-n-40}) covariance function.}}}
    \label{com-dense}
\end{figure}

\begin{figure}[!]
    \centering
    \begin{subfigure}{\textwidth}
    \includegraphics[height=0.22\textheight, width = \linewidth]{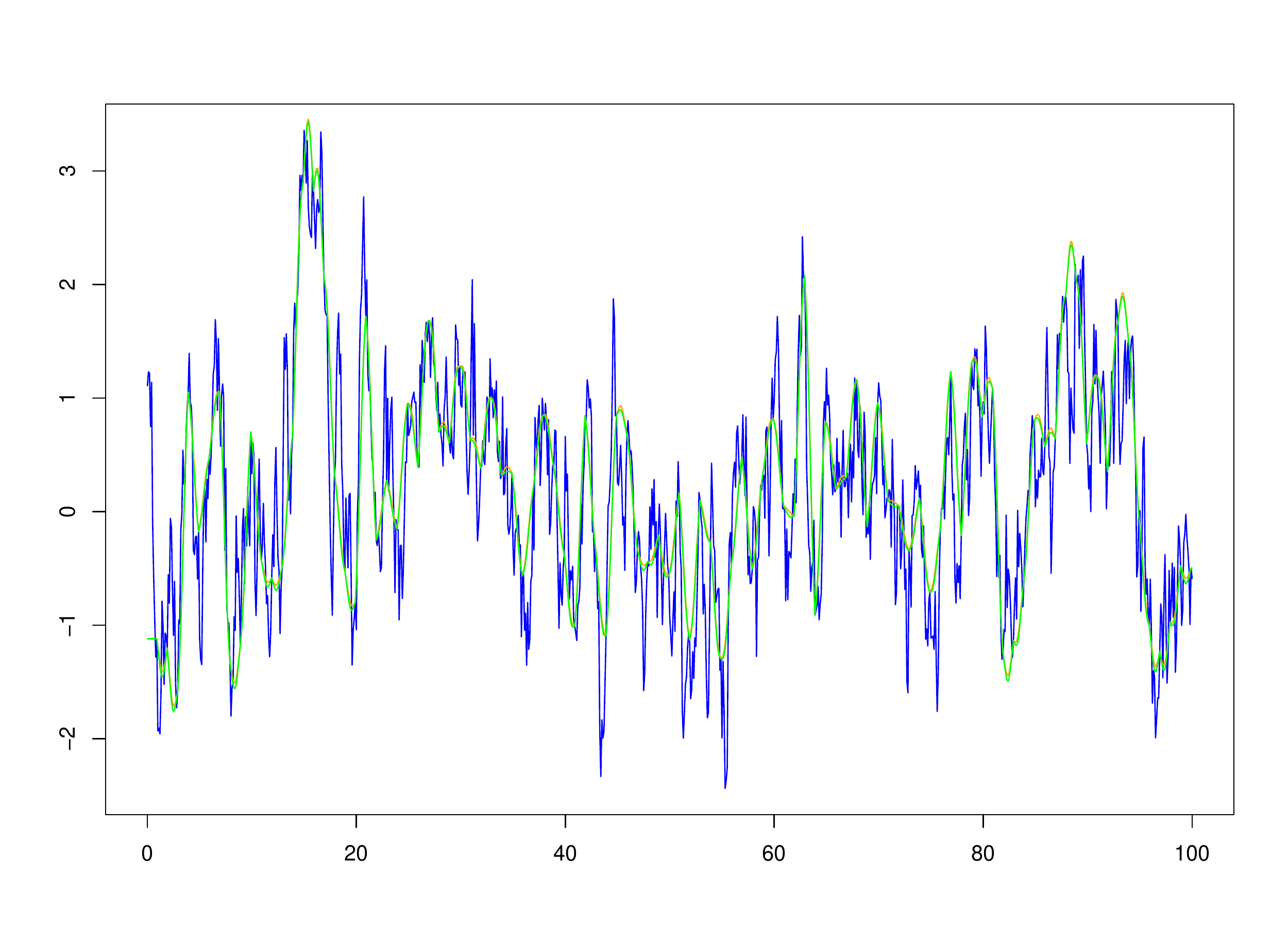}
    \caption{$X$ with exponential covariance $C_1$, observed at $t_j= j$, $j=1,\ldots,100$.}
    \label{expo-n-100}
    \end{subfigure}
    \newline
    \begin{subfigure}{\textwidth}
    \includegraphics[height=0.22\textheight, width = \linewidth]{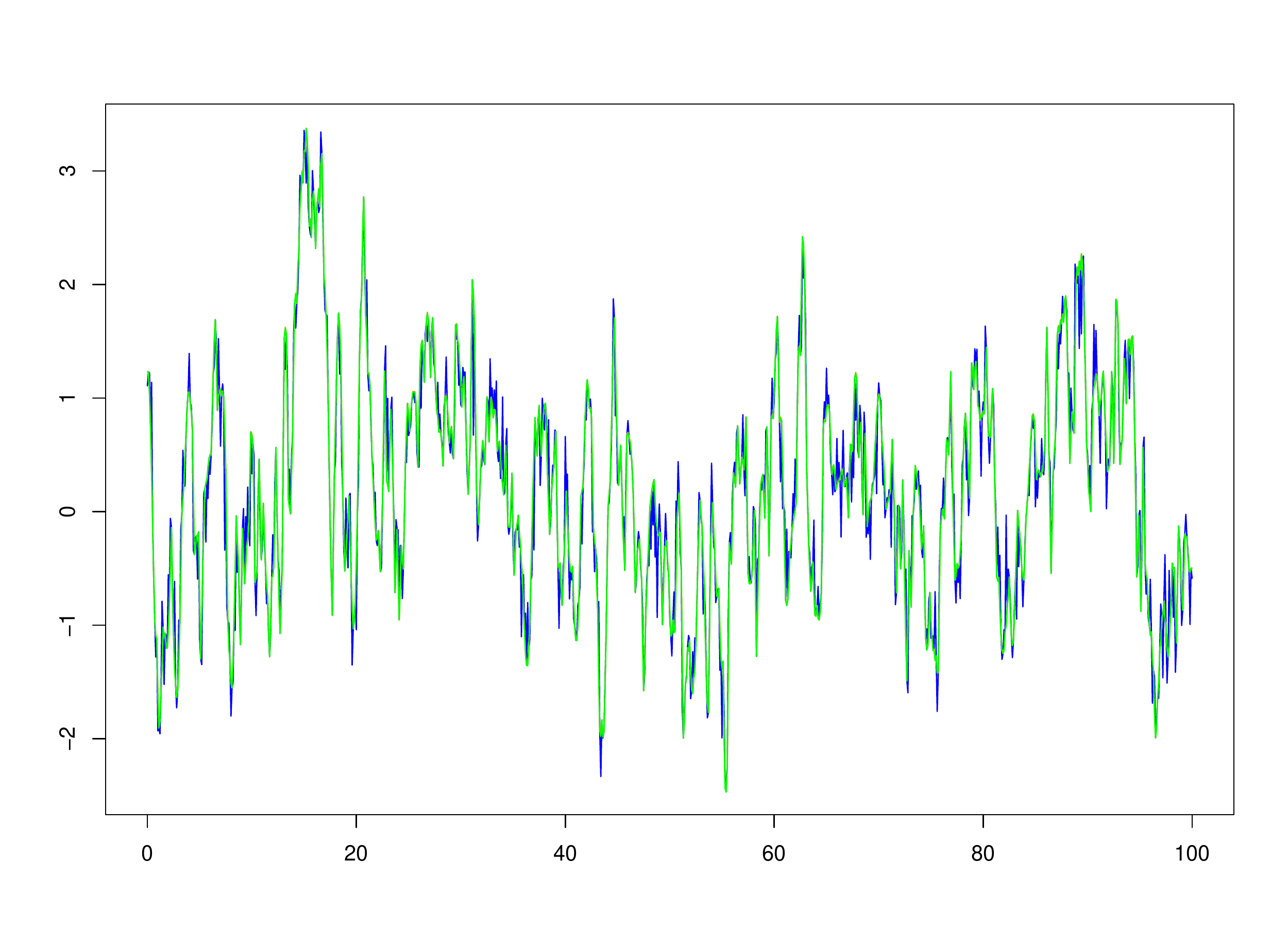}
    \caption{$X$ with exponential covariance $C_1$, observed at $t_j=0.2j$, $j=1,\ldots,500$.}
    \label{expo-n-500}
    \end{subfigure}
    \caption{{Comparison of trajectories of  Gaussian process $X$ with exponential covariance (blue) and its predictor $\widehat{X}$ with unknown mean (green)  observed at $t_j=0.2j$, $j=1,\ldots,500$ (\ref{expo-n-500}) and $t_j= j$, $j=1,\ldots,100$ (\ref{expo-n-100}). }}
    \label{expo-expo}
\end{figure}
\begin{figure}[!]
    \centering
    \begin{subfigure}{\textwidth}
    \includegraphics[height = 0.28\textheight, width = \linewidth]{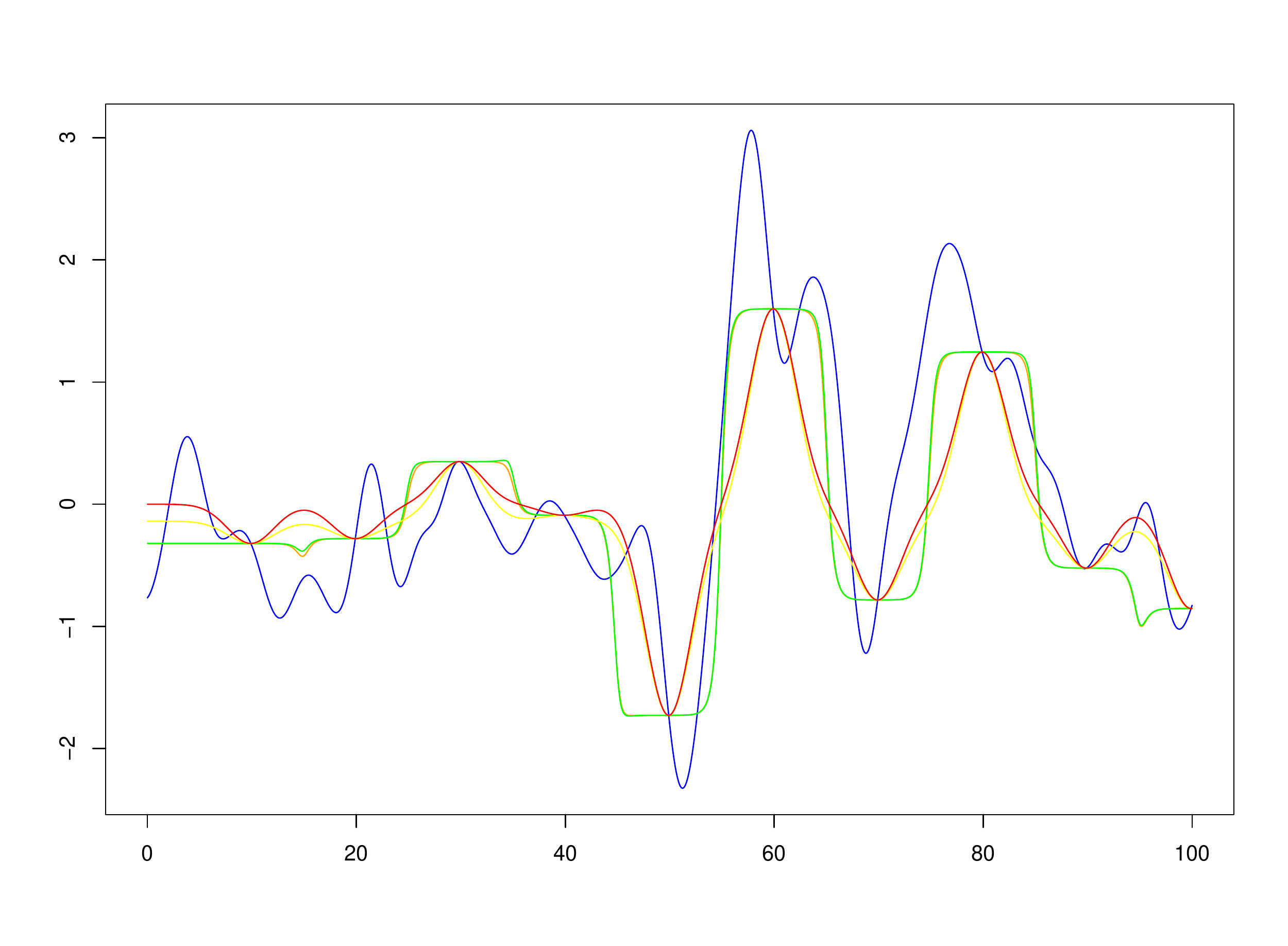}
    \end{subfigure}
    \caption{{A path of a Gaussian process $X$ (blue) with Gaussian covariance function is compared to our new linear predictor $\widehat{X}$ with known mean $\mu=0$  (orange),  with unknown mean (green), simple kriging (red) and ordinary kriging (yellow).} } \label{com-all-n-10}
\end{figure}

Next, we compare the performance of our new extrapolation method with  simple and ordinary kriging on the observation grid $t_{j}=10j,j= 1,\ldots,10$. Figure \ref{com-all-n-10} contains a realisation of a Gaussian process with Gaussian covariance $C_2$ together with our predictors $\widehat{X}$ from \eqref{a7} and  \eqref{eq:ExtrMethMu0} as well as simple and ordinary kriging estimates $\tilde{X}$ from \eqref{X:kr}. {We see that both our methods perform equally well. 
 We use boxplots to visualize the distance-in-measure error} $v_{1}\left(A_{X}(u_{j}) \Delta A_{\widehat{X}}(u_{j})\right)$ as well as $v_{1}\left(A_{X}(u_{j}) \Delta A_{\tilde{X}}(u_{j})\right)$
 at levels $u_j \in \left\{-2,-1,0,1,2\right\}$. To this end, $1200$ realisations of a Gaussian process $X$ with Gaussian covariance have been simulated. Observed at ten time spots $t_j=10j$, $j=1,\ldots,10$, they have been extrapolated using the above four methods to compare: linear predictors $\widehat{X}$ with known or unknown mean as well as (simple and ordinary) kriging extrapolators $\tilde X$. It is seen in Figure \ref{boxplot} that the error is largest at the level $u_j=\mu=0$ of the mean of $X$ and decreases with increasing $|u_j|$, as expected from formula \eqref{eq:TargetF}. The mean and the median of the error is better for the kriging methods for levels $u_j=\pm 1, \pm 2$ and comparable or slightly worse than our extrapolators for $u_j=0$. To explain this, it is enough to recall that our extrapolators  \eqref{a7} and  \eqref{eq:ExtrMethMu0} have the minimal mean distance-in-measure error in the class of all linear predictors with the same marginal distribution as $X(0)$. However, kriging methods minimize the prediction variance without this additional restriction.  
 
 {
    We extend our simulations and repeat the above scheme with $\mu=1,$ $u\in [-1,3]$ and covariance function $C_3.$ The mean values of $v_{1}\left(A_{X}(u_{j}) \Delta A_{\widehat{X}}(u_{j})\right)$ (predictor \eqref{a7}) and $v_{1}\left(A_{X}(u_{j}) \Delta A_{\widetilde{X}}(u_{j})\right)$ (ordinary kriging) are presented in Figure~\ref{Mu1l}. One can observe that our extrapolator is better around $u=\mu=1.$ At the same time, the marginal distributions of $\tilde{X}$ differ a lot from $N(1,1).$ For example, we consider each trajectory of $X,$ $\widehat{X},$ and $\tilde{X}$ as a sample and compute the corresponding sample variances $\hat{\sigma}^2$. Under the assumption of stationarity, these estimates must be close to $\sigma^2=1.$ Boxplots for $\hat{\sigma}^2$ in Figure~\ref{Mu1r} show that it is true for $\widehat{X},$ and the median value of $\hat{\sigma}^2$ for $\tilde{X}$ is 0.212 only.
 }

\begin{figure}[ht!]
    \centering
    \begin{subfigure}{0.48\textwidth}
    \includegraphics[width = \linewidth]{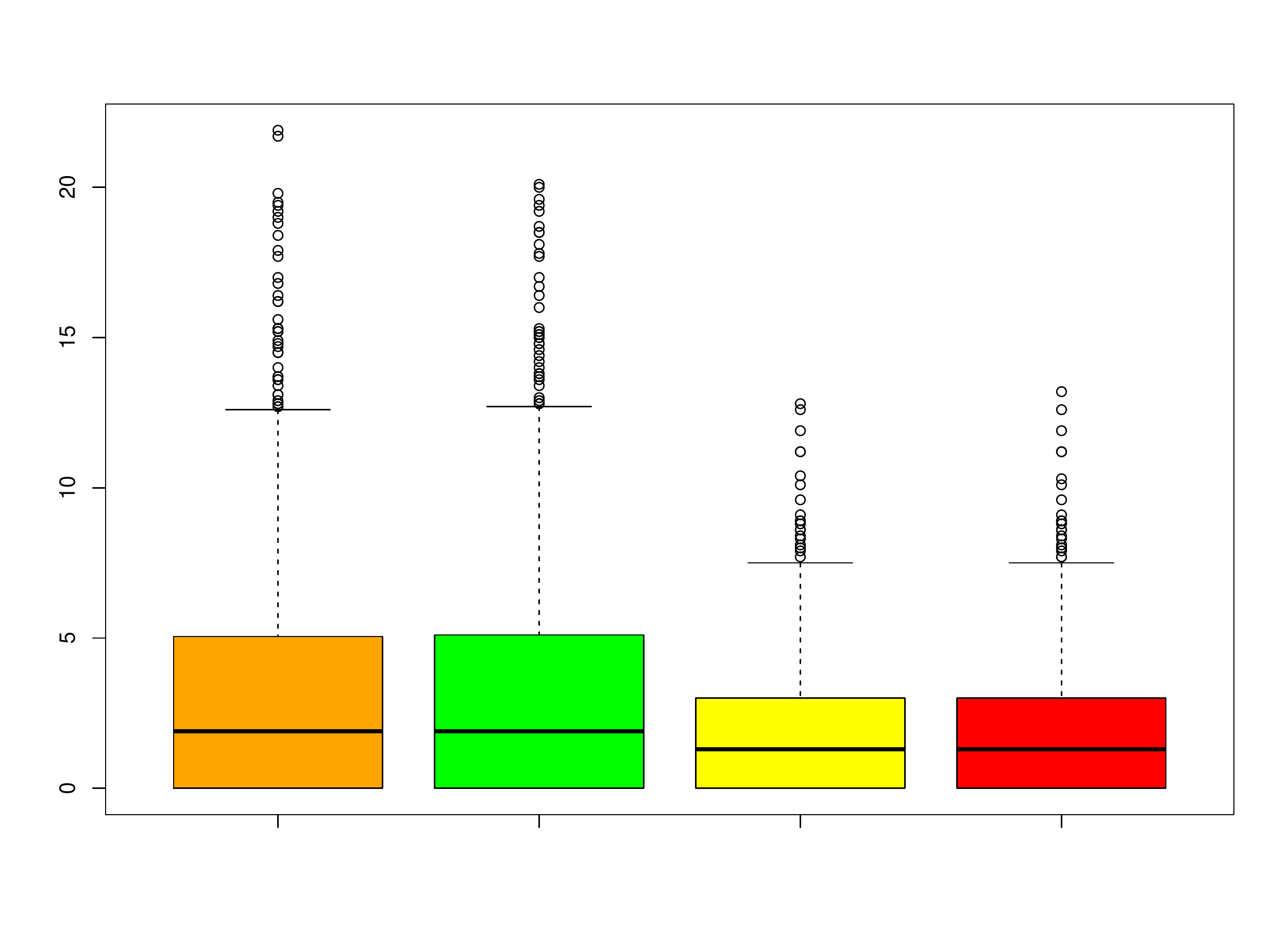}
    \caption{Distance-in-measure error of excursion sets at level $u=-2$.}
    \label{boxplot-u--2}
    \end{subfigure}
    \hfill
    \begin{subfigure}{0.48\textwidth}
    \includegraphics[width = \linewidth]{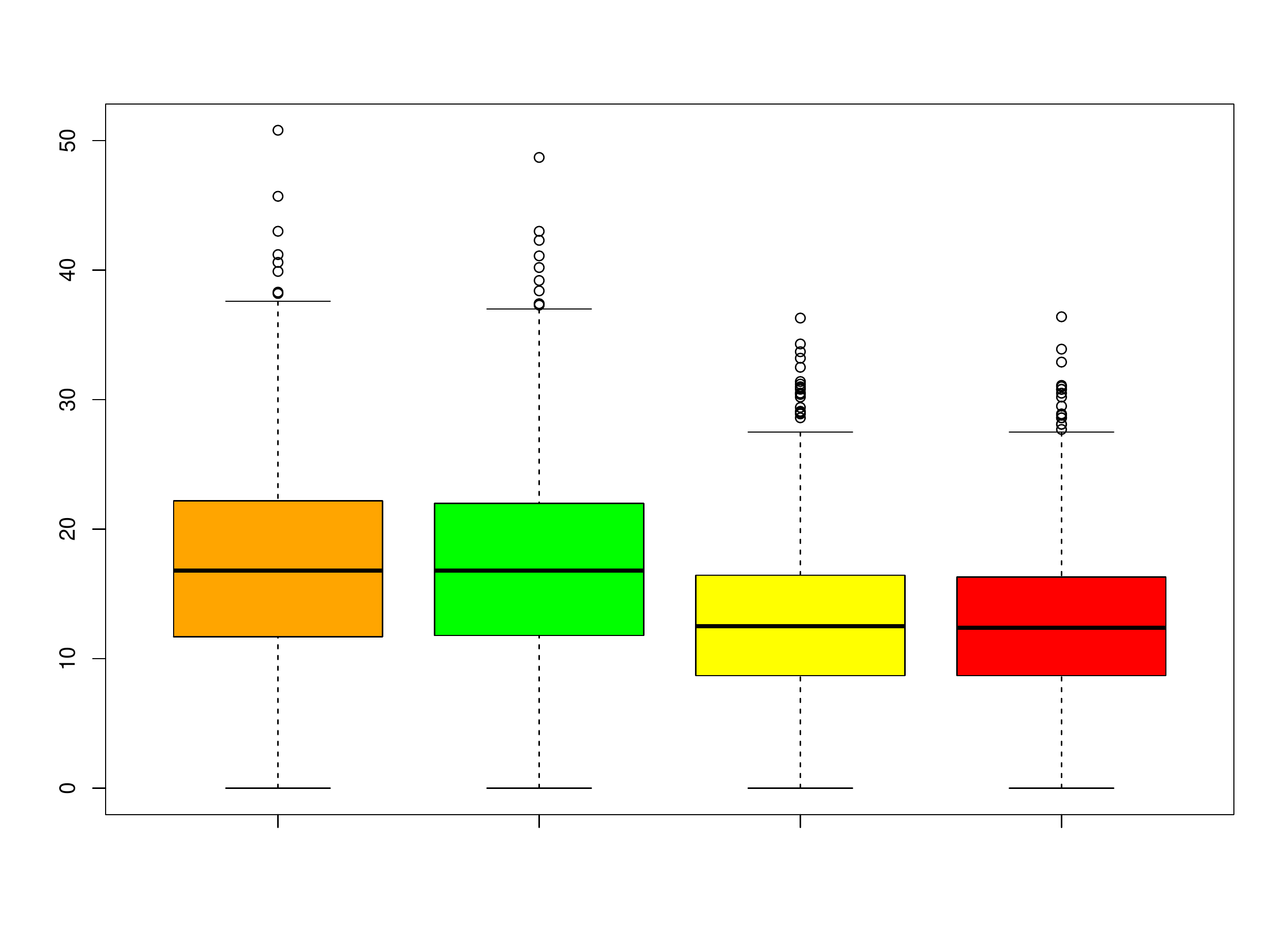}
    \caption{Distance-in-measure error of excursion sets at level $u= -1$.}
    \label{boxplot-u--1}
    \end{subfigure}
    \hfill
    \begin{subfigure}{0.48\textwidth}
    \includegraphics[width = \linewidth]{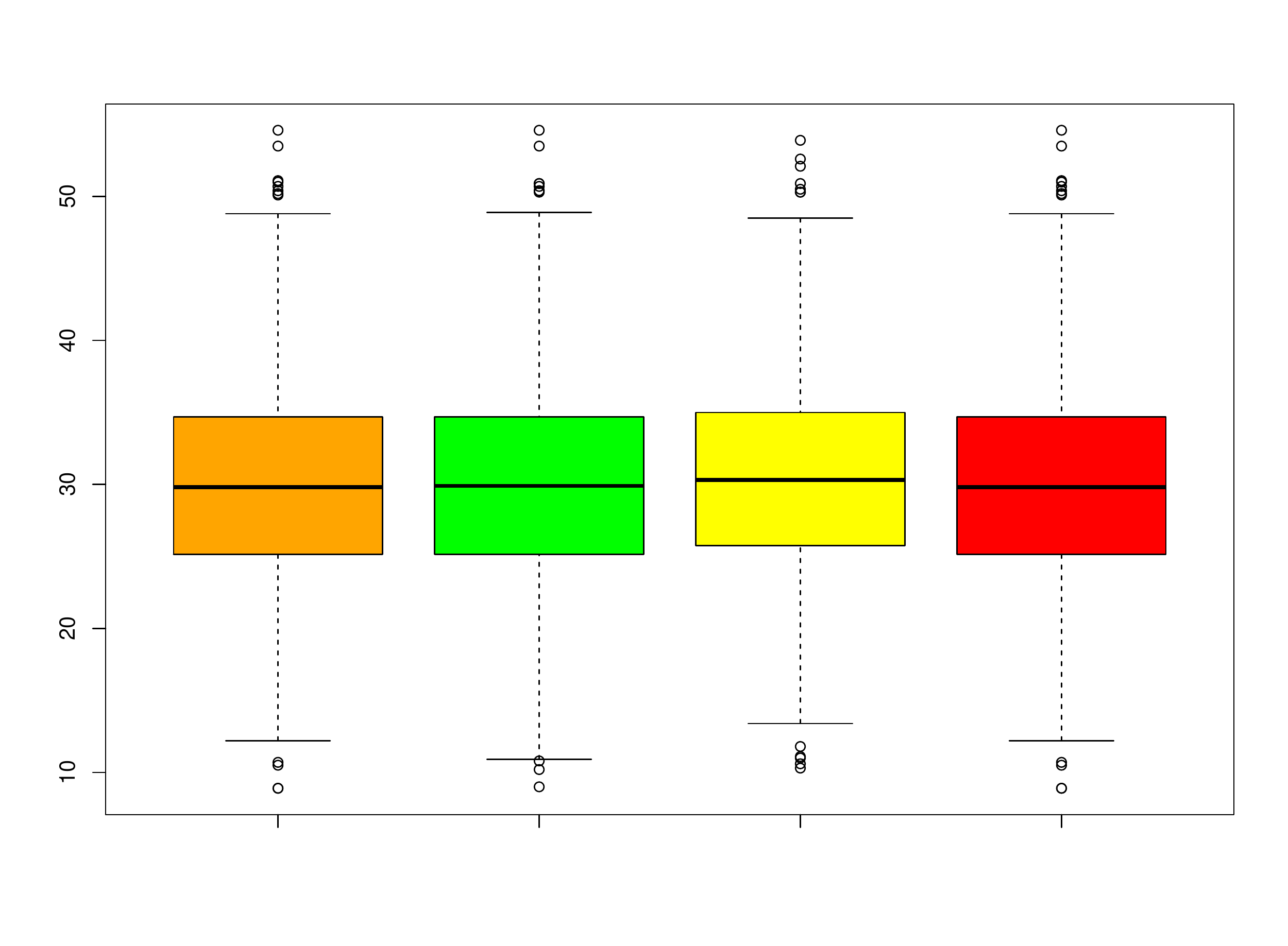}
    \caption{Distance-in-measure error of excursion sets at level $u=0$.}
    \label{boxplot-u-0}
    \end{subfigure}
    \hfill
    \begin{subfigure}{0.48\textwidth}
    \includegraphics[width = \linewidth]{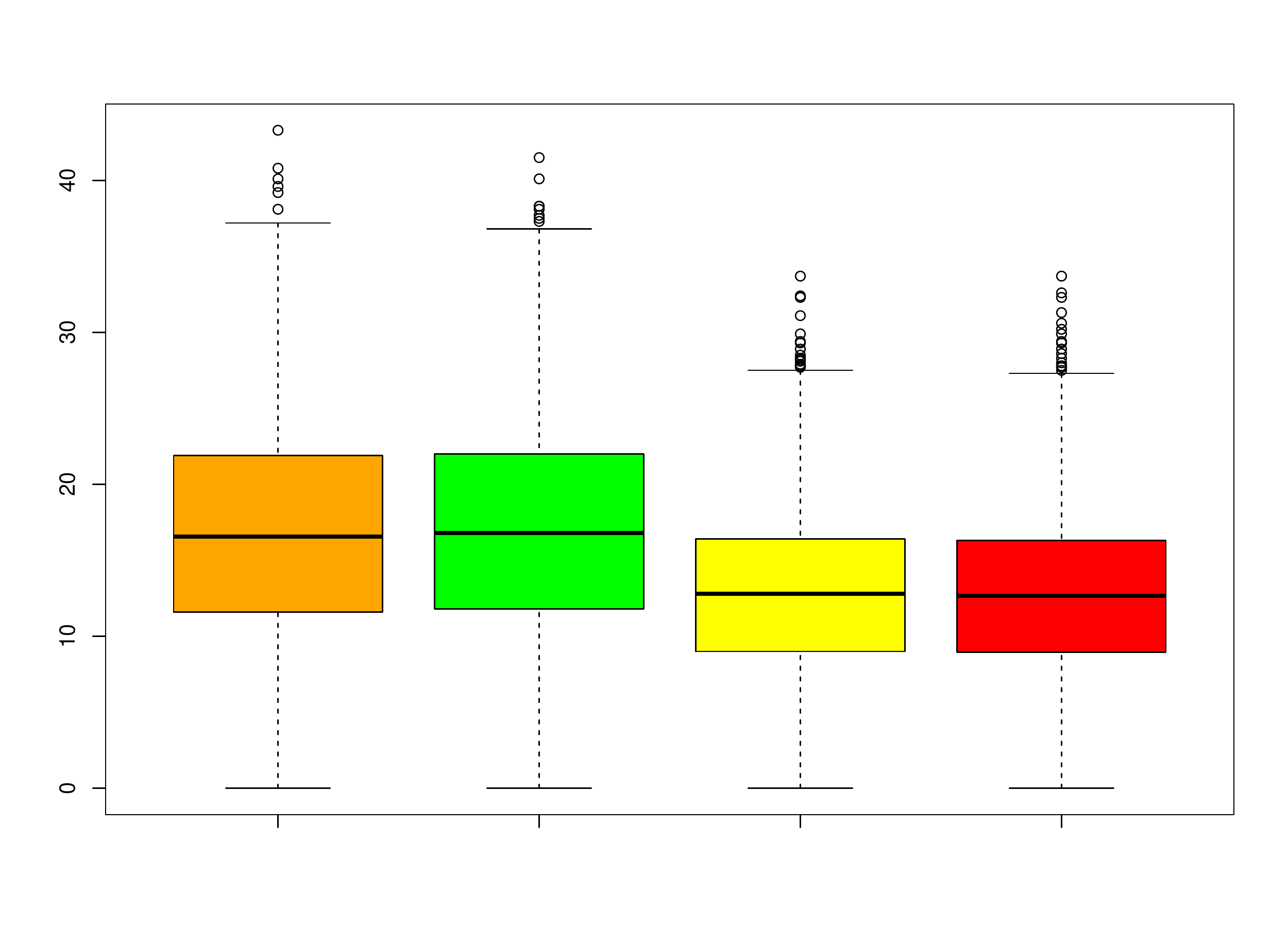}
    \caption{Distance-in-measure error of excursion sets at level  $u=1$.}
    \label{boxplot-u-1}
    \end{subfigure}
    \hfill
    \begin{subfigure}{0.48\textwidth}
    \includegraphics[height = 3.2cm,width = \linewidth]{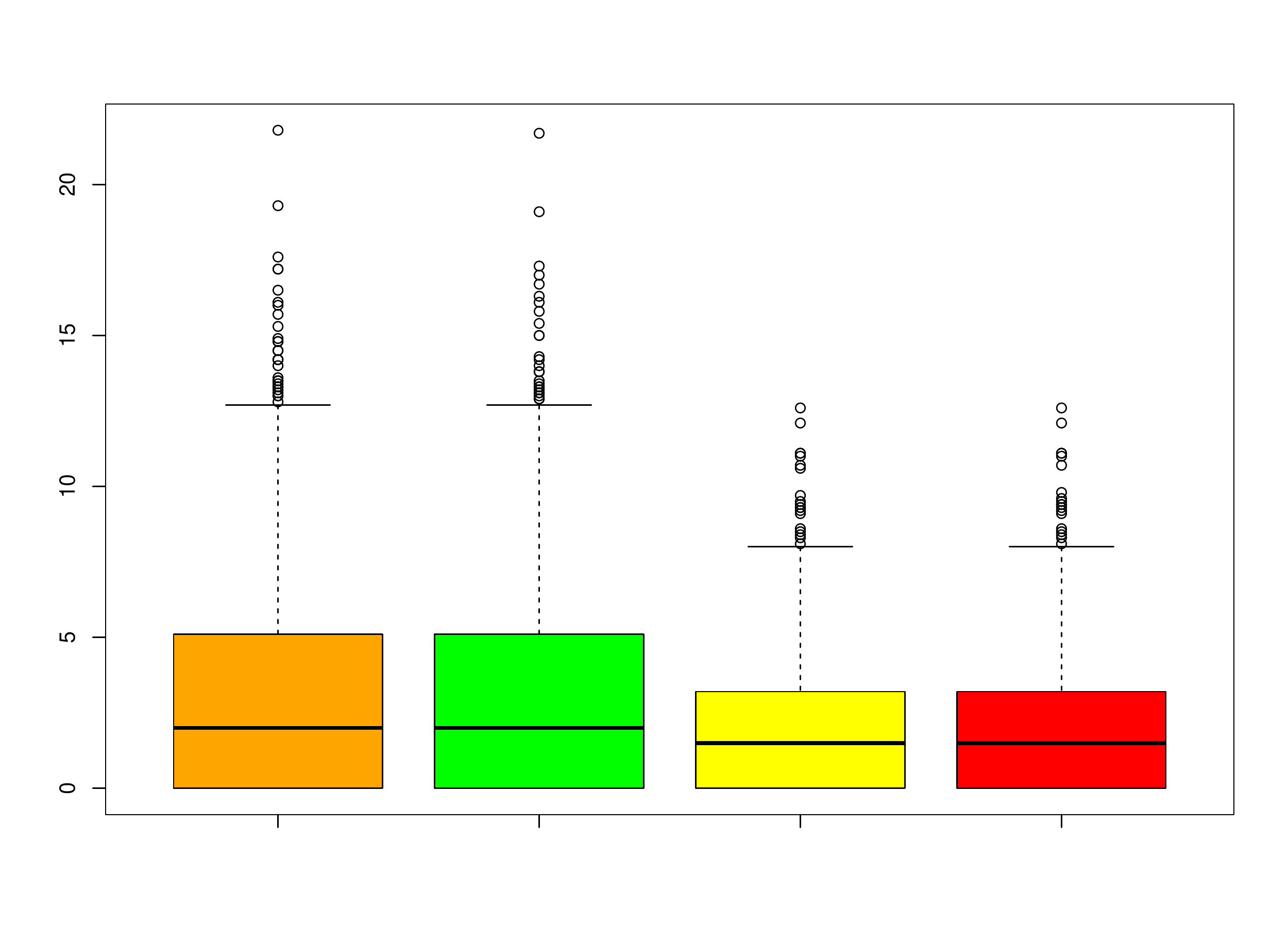}
    \caption{Distance-in-measure error of excursion sets at level  $u=2$.}
    \label{boxplot-u-2}
    \end{subfigure}
    \caption{{Boxplots for the length of the symmetric difference of excursion sets of $X$ and predictor $\widehat{X}$ with known mean $\mu=0$ (orange), unknown mean  (green) as well as  kriging predictor $\tilde X$: ordinary (yellow) and simple  (red).  The stationary Gaussian process $X$ with Gaussian covariance function is simulated 1200 times. The excursions are taken at levels (\ref{boxplot-u--2}) $u = -2$, (\ref{boxplot-u--1}) $u = -1$, (\ref{boxplot-u-0}) $u = 0$,   (\ref{boxplot-u-1})    $u = 1$  and (\ref{boxplot-u-2}) $u = 2$.}}
    \label{boxplot}
\end{figure}

\begin{figure}[ht!]
    \centering
     \begin{subfigure}{0.45\textwidth}
    \includegraphics[width = 0.95\linewidth]{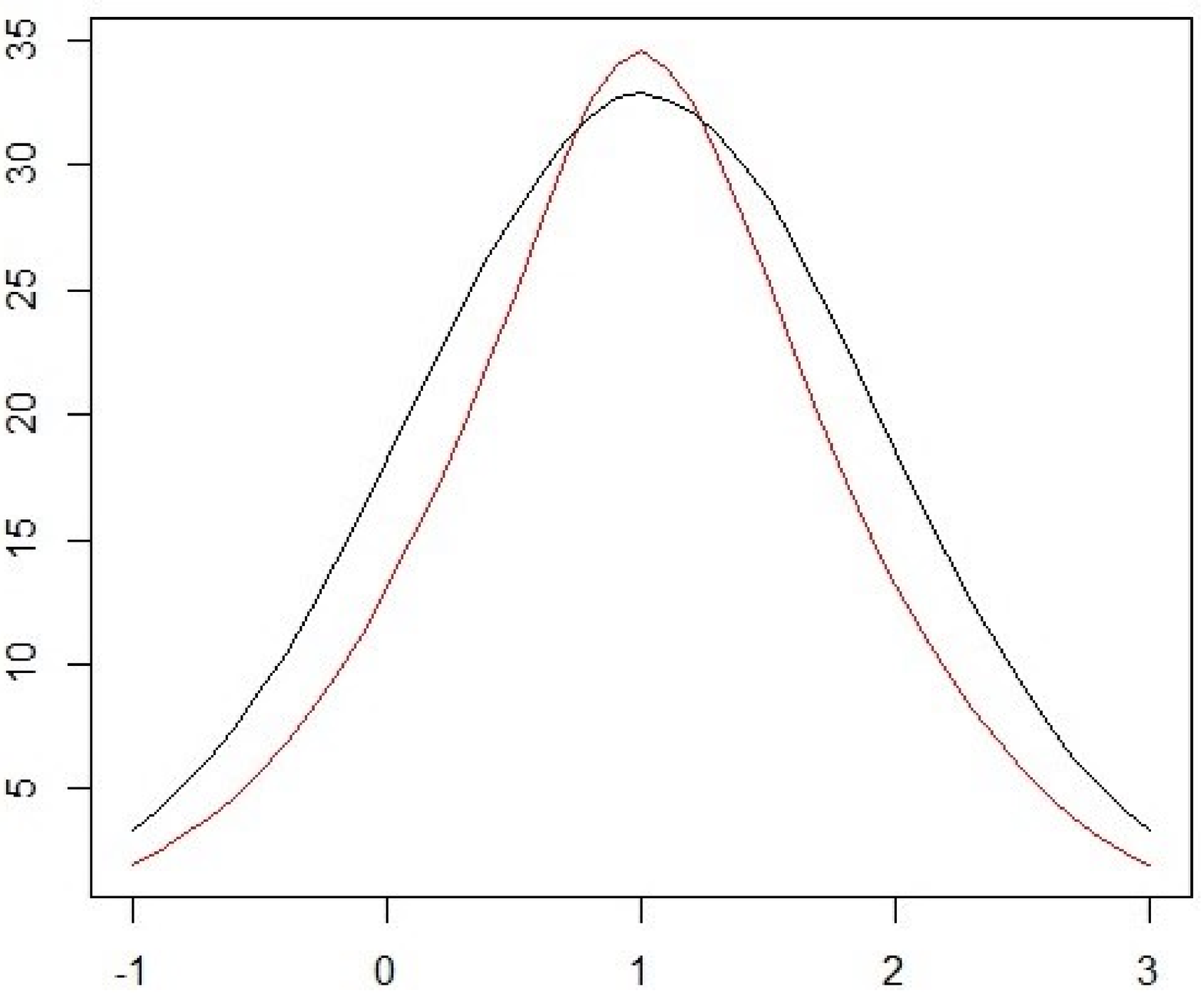}
    \caption{{The mean length of the symmetric difference of excursion sets of $X$ and predictor $\widehat{X}$ with unknown mean  (black) as well as ordinary kriging predictor $\tilde X$ (red). The excursions are taken at levels $u\in[-1,3].$}}
    \label{Mu1l}
    \end{subfigure}
    \hspace{1em}
    \begin{subfigure}{0.45\textwidth}
        \includegraphics[width = 0.95\linewidth]{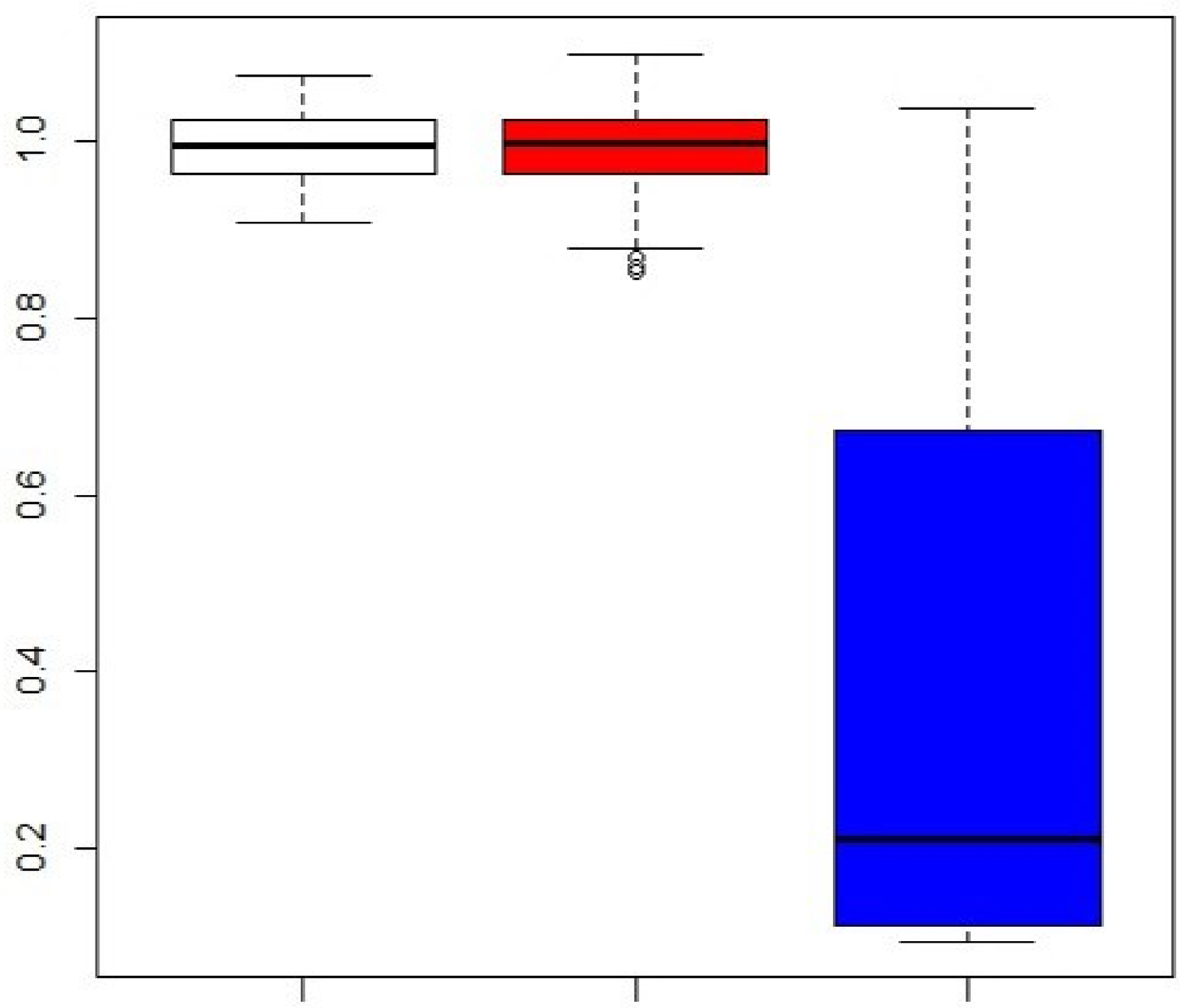}
    \caption{{Box plots of the variance estimators based on simulated trajectory $X$ (black), predictor $\widehat{X}$ with unknown mean  (red) as well as ordinary kriging predictor $\tilde X$ (blue).}}
    \label{Mu1r}
    \end{subfigure}
    \caption{{Comparison of level sets of   $\widehat{X}$ and $\tilde X$ based on   $1200$ simulated paths of the stationary Gaussian process $X$ with the Bessel covariance function ($\mu=1$, $\sigma=1$).}}
    \label{Mu1}
\end{figure}

 To summarize, we developed a quite universal framework for the linear extrapolation of stationary random functions without any additional moment existence assumptions. This framework performs well (comparable to kriging) in the Gaussian case. It is a future challenge to apply it to heavy-tailed stationary infinitely divisible  random functions.

 \bibliographystyle{amsplain}
 \bibliography{mybibfile}

\providecommand{\bysame}{\leavevmode\hbox to3em{\hrulefill}\thinspace}
\providecommand{\MR}{\relax\ifhmode\unskip\space\fi MR }
\providecommand{\MRhref}[2]{%
  \href{http://www.ams.org/mathscinet-getitem?mr=#1}{#2}
}
\providecommand{\href}[2]{#2}
\begin{thebibliography}{10}

\bibitem{AdTay07}
R.~Adler and J.~Taylor, \emph{Random fields and geometry}, Springer Monographs
  in Mathematics, Springer, New York, 2007.

\bibitem{AliGold03}
F.~Alizadeh and D.~Goldfarb, \emph{Second-order cone programming}, Math.
  Program. \textbf{95} (2003), no.~1, Ser. B, 3--51, ISMP 2000, Part 3
  (Atlanta, GA).

\bibitem{ANZH14-19}
A.~Z. Averbuch, P.~Neittaanm\"{a}ki, and V.~A. Zheludev, \emph{Spline and
  spline wavelet methods with applications to signal and image processing.
  {V}ol. {I-III}}, Springer, Cham, 2014, 2016, 2019.

\bibitem{AzWsche09}
J.-M. Aza\"{\i}s and M.~Wschebor, \emph{Level sets and extrema of random
  processes and fields}, John Wiley \& Sons, Inc., Hoboken, NJ, 2009.

\bibitem{Azzetal16}
D.~Azzimonti, J.~Bect, C.~Chevalier, and D.~Ginsbourger, \emph{Quantifying
  uncertainties on excursion sets under a {G}aussian random field prior},
  SIAM/ASA J. Uncertain. Quantif. \textbf{4} (2016), no.~1, 850--874.

\bibitem{AzzGins18}
D.~Azzimonti and D.~Ginsbourger, \emph{Estimating orthant probabilities of
  high-dimensional {G}aussian vectors with an application to set estimation},
  J. Comput. Graph. Statist. \textbf{27} (2018), no.~2, 255--267.

\bibitem{BerlTho04}
A.~Berlinet and C.~Thomas-Agnan, \emph{Reproducing kernel {H}ilbert spaces in
  probability and statistics}, Kluwer Academic Publishers, Boston, MA, 2004.

\bibitem{Bianc17}
M.~E. Biancolini, \emph{Fast radial basis functions for engineering
  applications}, Springer, Cham, 2017.

\bibitem{BolLind15}
D.~Bolin and F.~Lindgren, \emph{Excursion and contour uncertainty regions for
  latent {G}aussian models}, J. R. Stat. Soc. Ser. B. Stat. Methodol.
  \textbf{77} (2015), no.~1, 85--106.

\bibitem{BoydVan04}
S.~Boyd and L.~Vandenberghe, \emph{Convex optimization}, Cambridge University
  Press, Cambridge, 2004.

\bibitem{Chevetal14}
C.~Chevalier, D.~Ginsbourger, J.~Bect, E.~Vazquez, V.~Picheny, and Y.~Richet,
  \emph{Fast parallel kriging-based stepwise uncertainty reduction with
  application to the identification of an excursion set}, Technometrics
  \textbf{56} (2014), no.~4, 455--465.

\bibitem{chil1999geostatistics}
J.~P. Chil\'es, , and P.~D. Delfiner, \emph{Geostatistics: Modeling spatial
  uncertainty}, John Wiley \& Sons, Inc., New York, 1999.

\bibitem{Cressie93}
N.~A.~C. Cressie, \emph{Statistics for spatial data}, Wiley Series in
  Probability and Mathematical Statistics: Applied Probability and Statistics,
  John Wiley \& Sons, Inc., New York, 1993.

\bibitem{Rcode}
A.~Das, V.~Makogin, and E.~Spodarev, \emph{R code for the extrapolation of
  {G}aussian random fields with minimal error in level sets},
  \url{https://www.uni-ulm.de/fileadmin/website_uni_ulm/mawi.inst.110/mitarbeiter/spodarev/publications/Software/extrapolation_code.R},
  2021.

\bibitem{DigRib07}
P.~J. Diggle and P.~J. Ribeiro, Jr., \emph{Model-based geostatistics}, Springer
  Series in Statistics, Springer, New York, 2007.

\bibitem{GaeGu10}
C.~Gaetan and X.~Guyon, \emph{Spatial statistics and modeling}, Springer Series
  in Statistics, Springer, New York, 2010.

\bibitem{book}
A.~Genz and F.~Bretz, \emph{Computation of multivariate normal and t
  probabilities}, Lecture Notes in Statistics, vol. 195, Springer, Berlin,
  Heidelberg, 2009.

\bibitem{Holhorn13}
K.~H\"{o}llig and J.~H\"{o}rner, \emph{Approximation and modeling with
  {B}-splines}, Society for Industrial and Applied Mathematics, Philadelphia,
  PA, 2013.

\bibitem{KSS11}
W.~Karcher, E.~Shmileva, and E.~Spodarev, \emph{Extrapolation of stable random
  fields}, Journal of Multivariate Analysis \textbf{115} (2013), 516--536.

\bibitem{KletRoz04}
R.~Klette and A.~Rosenfeld, \emph{Digital geometry. geometric methods for
  digital picture analysis}, Morgan Kaufmann Publ., San Francisco; Elsevier,
  Amsterdam, 2004.

\bibitem{LaiSchu07}
M.-J. Lai and L.~L. Schumaker, \emph{Spline functions on triangulations},
  Encyclopedia of Mathematics and its Applications, vol. 110, Cambridge
  University Press, Cambridge, 2007.

\bibitem{Lantu02}
C.~Lantu\'{e}joul, \emph{Geostatistical simulation: Models and algorithms},
  Springer, Berlin, 2002.

\bibitem{Mather19}
G.~Matheron, \emph{Matheron's theory of regionalized variables}, International
  Association for Mathematical Geosciences. Studies in Mathematical
  Geosciences, Oxford University Press, Oxford, 2019, Edited by V.
  Pawlowsky-Glahn and J. Serra.

\bibitem{Moh17}
M.~Mohammadi, \emph{Prediction of {$\alpha$}-stable {GARCH} and
  {ARMA}-{GARCH}-{M} models}, J. Forecast. \textbf{36} (2017), no.~7, 859--866.

\bibitem{Moh09}
M.~Mohammadi and A.~Mohammadpour, \emph{Best linear prediction for
  {$\alpha$}-stable random processes}, Statist. Probab. Lett. \textbf{79}
  (2009), no.~21, 2266--2272.

\bibitem{Roz67}
Yu.~A. Rozanov, \emph{Stationary random processes}, Holden-Day, Inc., San
  Francisco-London-Amsterdam, 1967.

\bibitem{SamTaq94}
G.~Samorodnitsky and M.~Taqqu, \emph{Stable non-{G}aussian random processes},
  Chapman \& Hall/CRC, 1994.

\bibitem{Scheu09}
M.~Scheuerer, \emph{A comparison of models and methods for spatial
  interpolation in statistics and numerical analysis}, Ph.D. thesis,
  Georg-August Universit{\"a}t, G{\"o}ttingen, 2009.

\bibitem{ScheuSchaSchla13}
M.~Scheuerer, R.~Schaback, and M.~Schlather, \emph{Interpolation of spatial
  data---a stochastic or a deterministic problem?}, European J. Appl. Math.
  \textbf{24} (2013), no.~4, 601--629.

\bibitem{Schlaetal15}
M.~Schlather, A.~Malinowski, P.~J. Menck, M.~Oesting, and K.~Strokorb,
  \emph{Analysis, simulation and prediction of multivariate random fields with
  package randomfields}, Journal of Statistical Software \textbf{63} (2015),
  no.~8, 1--25.

\bibitem{shi96}
A.~N. Shiryaev, \emph{Probability}, Springer, New York, 1996.

\bibitem{Spodarev_2014}
E.~Spodarev, E.~Shmileva, and S.~Roth, \emph{Extrapolation of stationary random
  fields}, Stochastic Geometry, Spatial Statistics and Random Fields
  (V.~Schmidt, ed.), Springer International Publishing, aug 2014, pp.~321--368.

\bibitem{Stein99}
M.~L. Stein, \emph{Interpolation of spatial data: Some theory for kriging}, 1
  ed., Springer Series in Statistics, Springer-Verlag New York, 1999.

\bibitem{Tomita90}
H.~Tomita, \emph{Statistics and geometry of random interface systems}, World
  Scientific, 1990.

\bibitem{VazMar06}
E.~Vazquez and M.~P. Martinez, \emph{Estimation of the volume of an excursion
  set of a {G}aussian process using intrinsic kriging}, arxiv:math/0611273,
  Preprint, 2006.

\bibitem{wackernagel2013multivariate}
H.~Wackernagel, \emph{Multivariate geostatistics: An introduction with
  applications}, Springer Berlin Heidelberg, 2013.

\end{thebibliography}

\end{document}